\newtheorem{theorem}{Theorem}[section]
\newtheorem{lemma}[theorem]{Lemma}
\newenvironment{remark}{\rem\rm}{\endrem}
\newcounter{unnumber}
\newenvironment{proof}{\prf\rm}{\hfill{$\blacksquare$}\endprf}
\newcommand{\R}{\mathbb{R}}%
\newcommand{\N}{\mathbb{N}}%
\newcommand{\e}{\varepsilon}%
\newcommand{\ol}{\overline}%
\newcommand{\n}{{\nabla}}
\newcommand{\To}{\longrightarrow}
\def\a{\alpha}
\def\b{\beta}
\def\e{\epsilon}
\def\d{\delta}
\def\<{\langle}
\def\>{\rangle}
\DeclareMathOperator*\argmin{argmin}
\title{Tikhonov regularization of a second order dynamical system with Hessian driven damping}
\author{Radu Ioan Bo\c{t} \thanks{University of Vienna, Faculty of Mathematics, Oskar-Morgenstern-Platz 1, A-1090 Vienna, Austria,
email: radu.bot@univie.ac.at. Research partially supported by FWF (Austrian Science Fund), project I 2419-N32.} \and
Ern\"{o} Robert Csetnek \thanks {University of Vienna, Faculty of Mathematics, Oskar-Morgenstern-Platz 1, A-1090 Vienna, Austria,
email: ernoe.robert.csetnek@univie.ac.at. Research supported by FWF (Austrian Science Fund), project P 29809-N32.}
 \and Szil\'{a}rd Csaba L\'{a}szl\'{o} \thanks{Technical University of Cluj-Napoca, Department of Mathematics, Memorandumului 28, Cluj-Napoca,
 Romania, e-mail: szilard.laszlo@math.utcluj.ro. This work was supported by a grant of Ministry of Research and Innovation, CNCS - UEFISCDI, project number PN-III-P1-1.1-TE-2016-0266 and by a grant of Ministry of Research and Innovation,
 CNCS - UEFISCDI, project number PN-III-P4-ID-PCE-2016-0190, within PNCDI III.}}
\begin{document}
\maketitle

{\bf The paper is dedicated to Prof. Marco A. L\'opez on the occasion of his 70th birthday.}\vspace{1ex}

\noindent \textbf{Abstract.} We investigate the asymptotic properties of the trajectories generated by a second-order dynamical system with Hessian 
driven damping and a Tikhonov regularization term in connection with the minimization of a smooth convex function in Hilbert spaces. We obtain fast convergence 
results  for the function values along the trajectories. The Tikhonov regularization term enables the derivation of strong convergence results of the trajectory to the minimizer of the objective function of minimum norm. \vspace{1ex}

\noindent \textbf{Key Words.} second order dynamical system, convex optimization, Tikhonov regularization, fast convergence methods, Hessian-driven damping\vspace{1ex}

\noindent \textbf{AMS subject classification.}  34G25, 47J25, 47H05, 90C26, 90C30, 65K10

\section{Introduction}\label{sec-intr}

The paper of Su, Boyd and Cand\`{e}s \cite{su-boyd-candes} was the starting point of intensive research of second order dynamical systems with an asymptotically vanishing damping term of the form
\begin{equation}\label{dysy-sbc}
\ddot{x}(t)+\frac{\a}{t}\dot{x}(t)+\n g(x(t))=0, \ t \geq t_0 > 0,
\end{equation}
where $g:\mathcal{H}\To \R$ is a convex and continuously Fr\'{e}chet differentiable function defined on a real Hilbert space $\mathcal{H}$ fulfilling $\argmin g \neq \emptyset$. The aim is to approach by the trajectories generated by this system the solution set of the optimization problem 
\begin{equation}\label{opt}\min_{x\in \mathcal{H}} g(x). 
\end{equation}
The convergence rate of the objective function along the trajectory is in case $\alpha>3$ of
$$g(x(t))-\min g=o\left(\frac{1}{t^2}\right),$$
while in case $\alpha=3$ it is of
$$g(x(t))-\min g=O\left(\frac{1}{t^2}\right),$$
where $\min g \in \R$ denotes the minimal value of $g$. Also in view of this fact, system \eqref{dysy-sbc} is seen as a continuous version of the celebrated Nesterov accelerated gradient scheme (see \cite{nesterov83}). In what concerns the asymptotic properties of the generated trajectories, weak convergence to a minimizer of $g$ as 
the time goes to infinity has been proved by Attouch, Chbani, Peypouquet and Redont in \cite{att-c-p-r-math-pr2018} (see also \cite{att-ch-r-esaim2019}) for $\alpha > 3$. Without any further geometrical assumption on $g$, the convergence of the trajectories in the case $\alpha \leq 3$ is still an open problem.

Second order dynamical systems with a geometrical Hessian driven damping term have aroused the interest of the researchers, due to both their applications in optimization and mechanics and their natural relations to Newton and Levenberg-Marquardt iterative methods (see \cite{alv-att-bolte-red}). Furthermore, it has been observed for some classes of optimization problems that a geometrical damping term governed by the Hessian can induce a stabilization of the trajectories. In \cite{att-p-r-jde2016} the dynamical system with Hessian driven damping term
\begin{equation}\label{dysy-hess}
\ddot{x}(t)+\frac{\a}{t}\dot{x}(t)+\b\n^2g(x(t))\dot{x}(t)+\n g(x(t))=0, \ t \geq t_0 > 0,
\end{equation}
where $\alpha \geq 3$ and $\beta >0$, has been investigated in relation with the optimization problem \eqref{opt}. Fast convergence rates for the values and the gradient of the objective function along the trajectories are obtained and the weak convergence of the trajectories to a minimizer of $g$ is shown. We would also like to mention that iterative schemes which result via (symplectic) discretizations of dynamical systems with Hessian driven damping terms have been recently formulated and investigated from the point of view of their convergence properties in \cite{shi-du-jordan-su2018, shi-du-su-jordan2019, att-ch-fad-r-arx2019}. 

Another development having as a starting point \eqref{dysy-sbc} is the investigation of dynamical systems involving a Tikhonov regularization term. Attouch, Chbani and Riahi investigated in this context in \cite{ACR} the system
\begin{equation}\label{dysy-sbc-tikh}
\ddot{x}(t)+\frac{\a}{t}\dot{x}(t)+\n g(x(t))+\e(t)x(t)=0, \ t \geq t_0 > 0,
\end{equation}
where $\alpha \geq 3$ and $\e:[t_0,+\infty)\To [0,+\infty)$. One of the main benefits of considering such a regularized  dynamical system is that it generates trajectories which converge strongly to the minimum norm solution of \eqref{opt}. Besides that, in \cite{ACR} it was proved that the fast convergence rate of the objective function values along the trajectories remains unaltered. For more insights into the role played by the Tikhonov regularization for optimization problems and, more general, for monotone inclusion problems, we refer the reader to \cite{alv-cab, att, att-com1996, com-peyp-sor}. 

This being said, it is natural to investigate a second order dynamical system which combines a Hessian driven damping and a Tikhonov regularization term and to examine if it inherits the properties of the dynamical systems \eqref{dysy-hess} and \eqref{dysy-sbc-tikh}. This is the aim of the manuscript, namely the analysis in the framework of the general assumption stated below of the dynamical system
\begin{equation}\label{dysy}
\ddot{x}(t)+\frac{\a}{t}\dot{x}(t)+\b\n^2g(x(t))\dot{x}(t)+\n g(x(t))+\e(t)x(t)=0, \ t \geq t_0 > 0, \ x(t_0)=u_0, \ \dot{x}(t_0)=v_0,
\end{equation}
where $\a\geq 3$ and $\b \geq 0$,  and $u_0, v_0 \in {\cal H}$.\vspace{1ex}

\fbox{\parbox{0.93\textwidth}{
{\bf General assumption:} 
\begin{itemize}
\item[$\bullet$] $g:\mathcal{H}\To \R$ is a convex and twice Fr\'{e}chet differentiable function with Lipschitz continuous gradient on bounded sets and $\argmin g \neq \emptyset$;

\item[$\bullet$] $\e:[t_0,+\infty)\To[0,+\infty)$ is a nonincreasing function of class $C^1$ fulfilling $\lim_{t\To+\infty}\e(t)=0$.
\end{itemize}
}}\vspace{1ex}

The fact that the starting time $t_0$ is taken as strictly greater than zero comes from the singularity of the damping coefficient $\frac{\a}{t}$. This is not a limitation of the generality of the proposed approach, since we will focus on the asymptotic behaviour of the generated trajectories. Notice that if $\mathcal{H}$ is finite-dimensional, then the Lipschitz continuity of $\nabla g$ on bounded sets follows from the continuity of $\nabla^2 g$.

To which extent the Tikhonov regularization does influence the convergence behaviour of the trajectories generated by \eqref{dysy} can be seen even when minimizing  a one dimensional function. Consider the convex and twice continuously differentiable function 
\begin{align}\label{functiong}
g : \R \rightarrow \R, \quad g(x)=\left\{
\begin{array}{ll}
-(x+1)^3, & \mbox {if } x < -1\\
0, & \mbox{ if } -1\leq x\leq 1\\
 (x-1)^3, & \mbox{ if } x > 1.
\end{array}\right.
\end{align}

\begin{figure}[h]
	\centering
	\captionsetup[subfigure]{position=top}
	{\includegraphics*[viewport=130 250 485 574,width=0.46\textwidth]{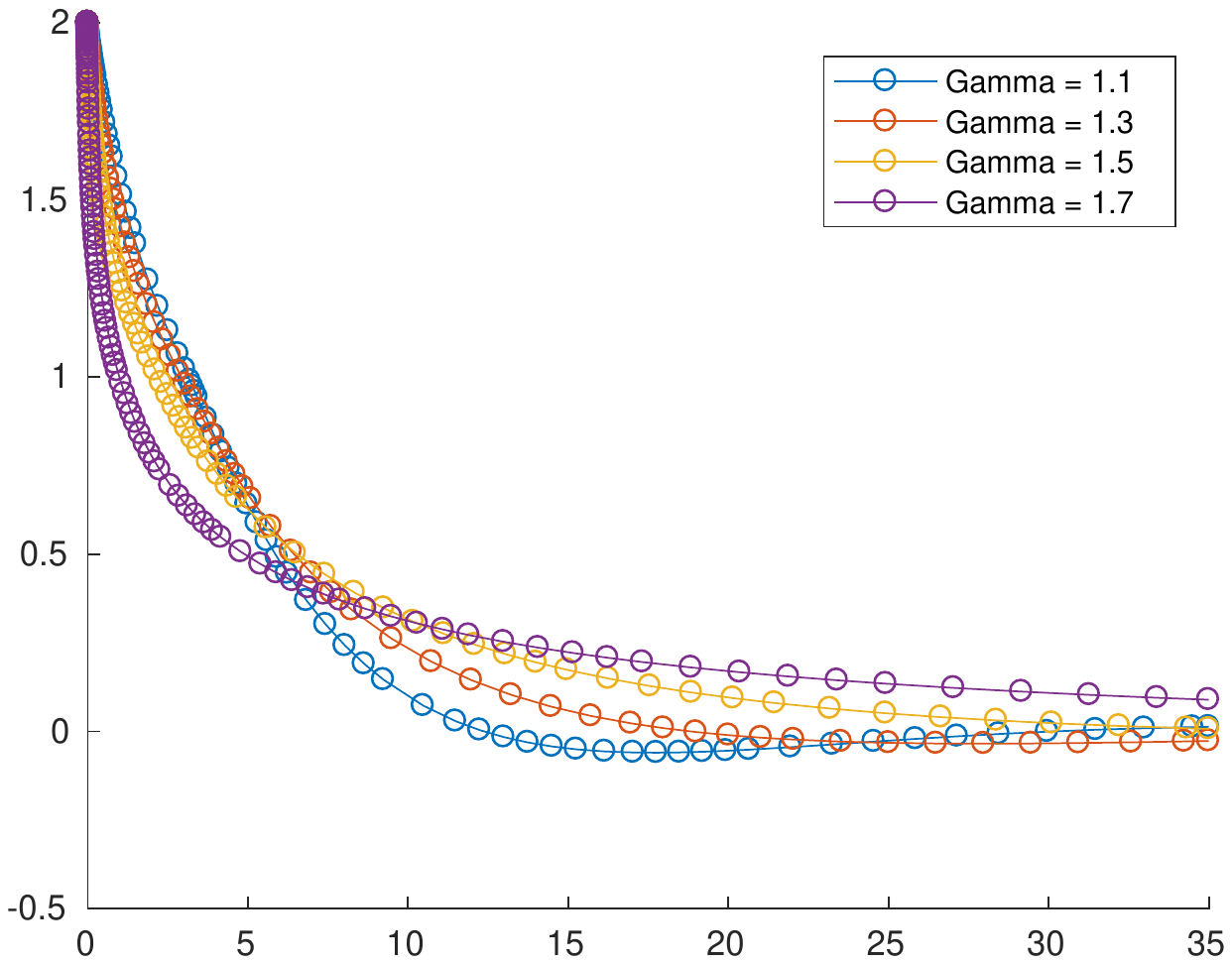}} \hspace{0.8cm}
	{\includegraphics*[viewport=130 250 485 574,width=0.46\textwidth]{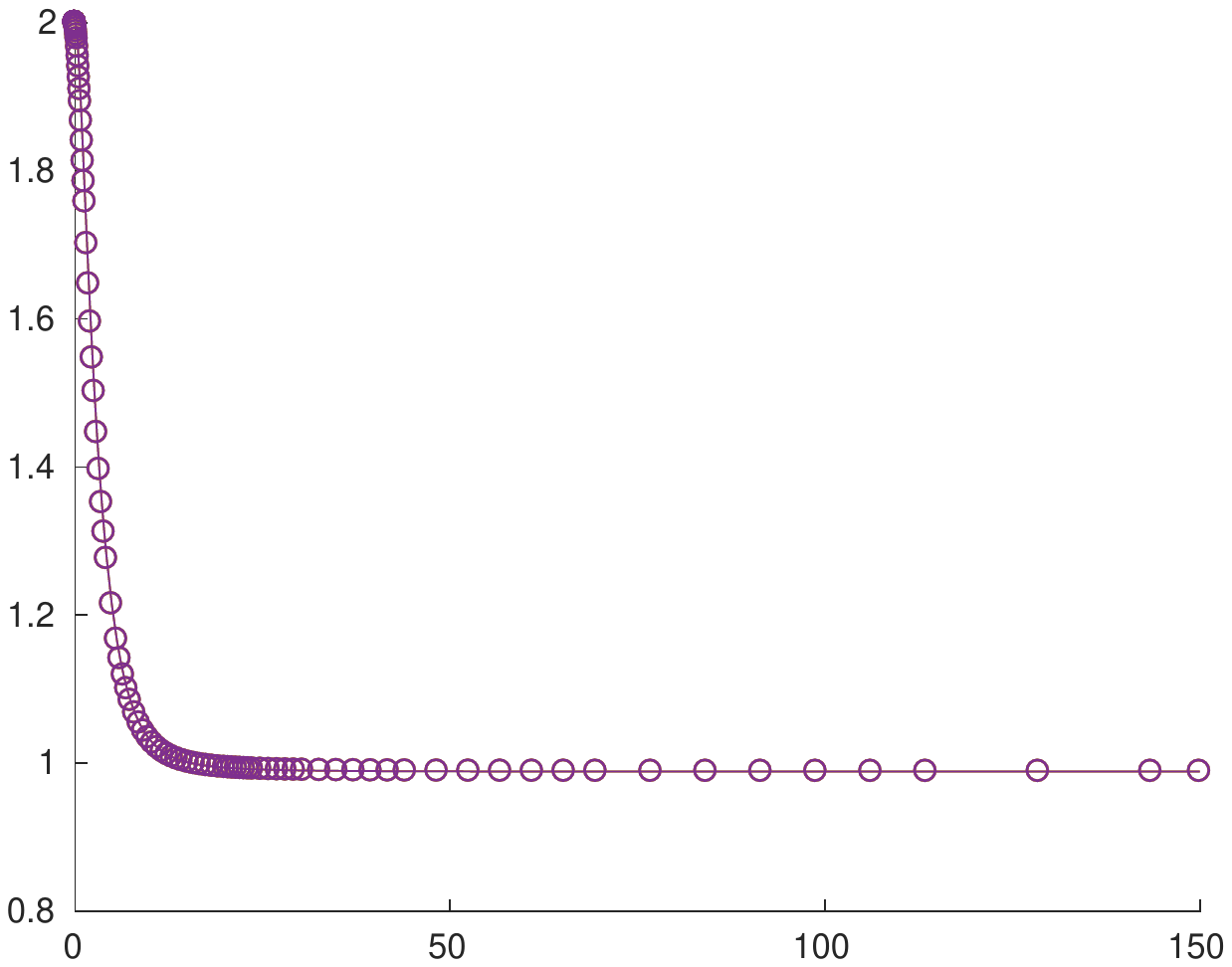}}\\
	{\includegraphics*[viewport=130 250 485 574,width=0.46\textwidth]{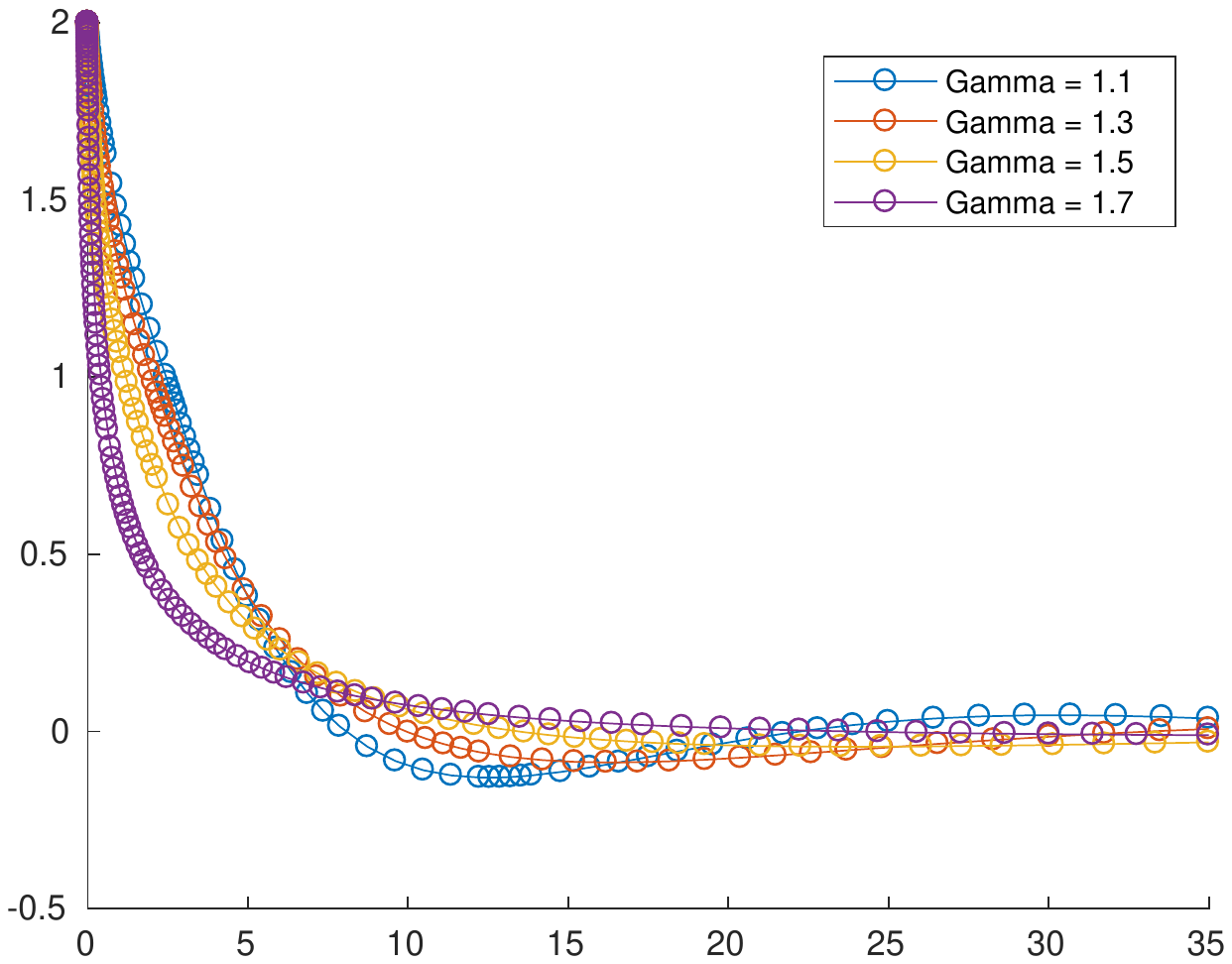}} \hspace{0.8cm}
	{\includegraphics*[viewport=130 250 485 574,width=0.46\textwidth]{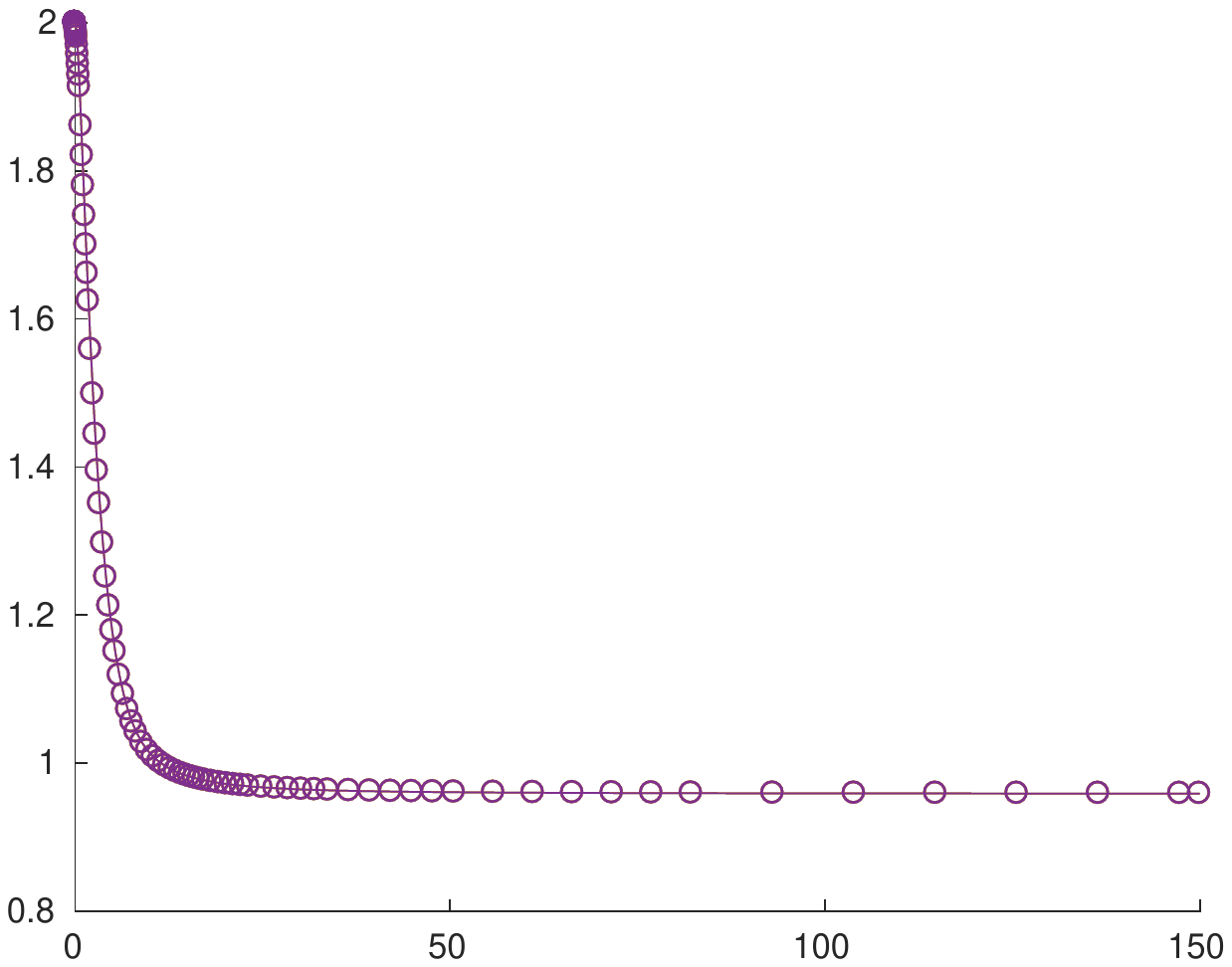}}
	\caption{\small First column: the trajectories of the dynamical system with Tikhonov regularization $\e(t)=t^{-\gamma}$ are approaching the minimum norm solution $x^*=0$.  Second column: the trajectories of the dynamical system without Tikhonov regularization the trajectory are approaching the optimal solution $1$.}
	\label{fig:ex1}	
\end{figure}

It holds that $\argmin g=[-1,1]$ and $x^*=0$ is its minimum norm solution. In the second column of 
Figure \ref{fig:ex1} we can see the behaviour of the trajectories generated by the dynamical system without Tikhonov regularization (which corresponds to the case when $\e$ is identically $0$) for $\beta =1$ and $\alpha =3$ and, respectively, $\alpha =4$. In both cases the trajectories are approaching the optimal solution $1$, which is a minimizer of $g$, however, not the minimum norm solution.

In the first column of Figure \ref{fig:ex1} we can see the behaviour of the trajectories generated by the dynamical system with Tikhonov  parametrizations of the form  $t \mapsto \e(t)=t^{-\gamma}$, for different values of $\gamma \in (1,2)$, which is in accordance to the conditions in
Theorem \ref{strongconvergence}, $\beta =1$ and $\alpha =3$ and, respectively, $\alpha =4$. The trajectories are approaching the minimum norm solution $x^*=0$.

The organization of the paper is as follows. We start the analysis of the dynamical system \eqref{dysy} by proving the existence and uniqueness of a global $C^2$-solution.  In the third section we provide two different settings for the Tikhonov parametrization $t \mapsto \e(t)$ in both of which $g(x(t))$ converges to $\min g$, the minimal value of $g$, with a convergence rate of $O\left(\frac{1}{t^2}\right)$ for $\alpha =3$ and of $o\left(\frac{1}{t^2}\right)$ for $\alpha >3$. The proof relies on Lyapunov theory; the choice of the right energy functional plays a decisive role in this context. Weak convergence of the trajectory is also derived for $\alpha >3$. In the last section we focus on the proof of strong convergence to a minimum norm solution: firstly, in a general setting, for the ergodic trajectory, and, secondly, in a slightly restrictive setting, for the trajectory $x(t)$ itself.

\section{Existence and uniqueness}\label{sec2}

In this section we will prove the existence and uniqueness of a global $C^2$-solution of the dynamical system \eqref{dysy}. The proof of the existence and uniqueness theorem is based on the idea to reformulate \eqref{dysy} as a particular first order dynamical system in a suitably chosen product space (see also \cite{att-p-r-jde2016}). 

\begin{theorem}\label{rewrite} For every initial value $(u_0, v_0) \in \mathcal{H}\times \mathcal{H}$, there exists a unique global $C^2$-solution $x:[t_0, + \infty) \to \mathcal{H}$ to \eqref{dysy}.
\end{theorem}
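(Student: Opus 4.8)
The plan is to recast the second-order system \eqref{dysy} as a first-order system on the product Hilbert space $\mathcal{H} \times \mathcal{H}$ and then invoke the Cauchy--Lipschitz (Picard--Lindel\"of) theorem for nonautonomous ODEs in the locally Lipschitz setting, supplemented by an a priori bound argument to upgrade the resulting maximal solution to a global one. The subtlety, compared with the system \eqref{dysy-hess} treated in \cite{att-p-r-jde2016}, is the presence of the term $\b\n^2 g(x(t))\dot x(t)$, which involves the second derivative of $g$ and would ordinarily force us to assume Lipschitz continuity of $\n^2 g$. The standard device, going back to \cite{att-p-r-jde2016}, is to absorb this term by introducing the new variable: observe that
\[
\frac{d}{dt}\big(\n g(x(t))\big) = \n^2 g(x(t))\dot x(t),
\]
so that $\b\n^2 g(x(t))\dot x(t) + \n g(x(t)) = \b\frac{d}{dt}\big(\n g(x(t))\big) + \n g(x(t))$. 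Hence \eqref{dysy} can be rewritten, after setting $y(t) = x(t) + \b\dot x(t) + \b\n g(x(t))$ (or a similar linear combination), as a first-order system in $(x,y)$ whose right-hand side involves $\n g$ but not $\n^2 g$.

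Concretely, I would set $z(t) = (x(t), y(t))$ with $y(t) := x(t) + \b \dot x(t) + \b\,\n g(x(t))$, differentiate, and substitute \eqref{dysy} to express $\dot z(t) = F(t, z(t))$ where
\[
F(t,(x,y)) = \Big(\tfrac{1}{\b}\big(y - x - \b\,\n g(x)\big),\ \big(1-\tfrac{\a}{t}\big)\tfrac{1}{\b}\big(y - x - \b\,\n g(x)\big) + \tfrac{\a}{t}x - \n g(x) - \e(t)x\Big)
\]
for $\b>0$ (the case $\b = 0$ reduces to the already-understood system \eqref{dysy-sbc-tikh} and can be handled separately, or treated directly by the same scheme without the change of variables). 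The key point is that $F$ is continuous in $t$ (since $\e \in C^1$ and in particular continuous) and locally Lipschitz in $z$ uniformly on compact $t$-intervals: this follows because $\n g$ is Lipschitz continuous on bounded sets by the General Assumption, and all other terms are affine in $(x,y)$ with coefficients that are continuous (hence locally bounded) in $t$. The Cauchy--Lipschitz theorem then yields a unique maximal solution on some interval $[t_0, T_{\max})$.

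The remaining step, and the one requiring genuine care, is to show $T_{\max} = +\infty$, i.e. to rule out finite-time blow-up. The plan is the standard energy/Gronwall argument: suppose $T_{\max} < +\infty$. Using convexity of $g$ and $\e \geq 0$, one constructs an energy-type functional (for instance of the form $\tfrac12\|\dot x(t)\|^2 + g(x(t)) + \tfrac{\e(t)}{2}\|x(t)\|^2$, possibly with an additional term to absorb the Hessian damping) and estimates its derivative along the trajectory, showing that it stays bounded on $[t_0, T_{\max})$; since $\e$ is nonincreasing and nonnegative this is where those hypotheses enter. Boundedness of the energy gives boundedness of $\|x(t)\|$ and $\|\dot x(t)\|$, hence of $\n g(x(t))$ (Lipschitz on bounded sets), hence of $y(t)$, hence of $\|z(t)\|$ on $[t_0, T_{\max})$; one then checks $\dot z$ is bounded too, so $z(t)$ has a limit as $t \uparrow T_{\max}$, and the solution can be extended past $T_{\max}$, contradicting maximality. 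I expect this blow-up exclusion to be the main obstacle, mostly in choosing the right Lyapunov functional and handling the Hessian term $\b\n^2 g(x(t))\dot x(t)$ cleanly in the energy estimate; everything else is routine. Finally, since $x \in C^1$ with $\dot x$ expressible via the $C^1$ data and $\n g \in C^1$, a bootstrap shows $x \in C^2$.
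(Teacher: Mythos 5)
Your proposal follows essentially the same route as the paper: reduce to a first-order system on $\mathcal{H}\times\mathcal{H}$ by absorbing the Hessian term through $\frac{d}{dt}\n g(x(t))=\n^2 g(x(t))\dot x(t)$, apply the local Cauchy--Lipschitz theorem using the Lipschitz continuity of $\n g$ on bounded sets, and exclude finite-time blow-up with the energy $W(t)=\frac12\|\dot x(t)\|^2+g(x(t))+\frac{\e(t)}{2}\|x(t)\|^2$, whose derivative is nonpositive by convexity of $g$ and monotonicity of $\e$ --- exactly the paper's functional. One correction: with your choice $y=x+\b\dot x+\b\n g(x)$ the Hessian term does not cancel (a residual $\b(1-\b)\n^2 g(x)\dot x$ survives in $\dot y$), so the displayed $F$ is not the correct vector field; the substitution that works, and the one the paper uses, is $y=\dot x+\b\n g(x)$. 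Also, boundedness of the energy does not by itself bound $\|x(t)\|$ (the term $\frac{\e(t)}{2}\|x(t)\|^2$ may degenerate and $g$ need not be coercive); one first bounds $\dot x$ from the energy and then bounds $x$ via $\|x(t)-x(t')\|\le\|\dot x\|_\infty\,|t-t'|$ on the finite interval $[t_0,T_{\max})$, after which boundedness of $y$ follows from the Lipschitz continuity of $\n g$ on bounded sets.
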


\begin{proof}
Let $(u_0, v_0) \in \mathcal{H}\times \mathcal{H}$. First we assume that $\beta =0$, which gives the dynamical system \eqref{dysy-sbc-tikh} investigated in \cite{ACR}. The statement follows from \cite[Proposition 2.2(b)]{cabotenglergadat} (see also the discussion in \cite[Section 2]{ACR}).

Assume now that $\beta >0$.  We notice that $x:[t_0,+\infty)\To\mathcal{H}$ is a solution of the dynamical system \eqref{dysy}, that is
$$
\ddot{x}(t)+\frac{\a}{t}\dot{x}(t)+\b\n^2g(x(t))\dot{x}(t)+\n g(x(t))+\e(t)x(t)=0,\,\,x(t_0)=u_0,\,\dot{x}(t_0)=v_0,
$$
if and only if $(x,y) :[t_0,+\infty)\To\mathcal{H} \times \mathcal{H}$ is a solution of the dynamical system
$$\left\{\begin{array}{lll}
\dot{x}(t)+\b\n g(x(t))-y(t)=0\\
\dot{y}(t)+\frac{\a}{t}\dot{x}(t)+\n g(x(t))+\e(t)x(t)=0\\
 x(t_0)=u_0,\,y(t_0)=v_0+\b\n g(u_0),
\end{array}
\right.
$$
which is further equivalent to
\begin{equation}\label{dynzx}
\left\{\begin{array}{lll}
\dot{x}(t)+\b\n g(x(t))-y(t)=0\\
\dot{y}(t) + \frac{\a}{t} y(t) + \left(1- \frac{\a\b}{t}\right) \n g(x(t)) +\e(t)x(t)=0\\
 x(t_0)=u_0,\,y(t_0)=v_0+\b\n g(u_0).
\end{array}
\right.
\end{equation}
We define $F:[t_0,+\infty) \times \mathcal{H} \times \mathcal{H} \rightarrow \mathcal{H} \times \mathcal{H}$ by
$$F(t,u,v) = \left (-\beta \n g(u) + v,  - \frac{\a}{t} v - \left(1- \frac{\a\b}{t}\right) \n g(u) - \e(t)u\right),$$
and write \eqref{dynzx} as
\begin{equation}\label{dynzx2}
\left\{\begin{array}{lll}
\big(\dot x(t), \dot y(t)\big) = F(t,x(t),y(t))\\
 \big(x(t_0), y(t_0)\big)=\big(u_0, v_0+\b\n g(u_0)\big).
\end{array}
\right.
\end{equation}
Since $\n g$ is Lipschitz continuous on bounded sets and continuously differentiable, the local existence and uniqueness theorem (see \cite[Theorem 46.2 and Theorem 46.3]{sellyou}) guarantees the existence of a unique solution $(x,y)$ of \eqref{dynzx2} defined on a maximum intervall $[t_0,T_{\max})$, where $t_0 < T_{\max}\le+\infty.$ Furthermore, either $T_{\max} = +\infty$ or $\lim_{t \rightarrow T_{\max}} \|x(t)\| + \|y(t)\| = +\infty$. We will prove that $T_{\max}=+\infty$, which will imply that $x$ is the unique global $C^2$-solution of \eqref{dysy}.

Consider the energy functional (see \cite{AttouchCzarnecki}) $${\cal E} : [t_0, +\infty) \rightarrow \R, \quad {\cal E}(t)=\frac12\|\dot x(t)\|^2+g(x(t))+\frac12\e({t})\|x(t)\|^2.$$
By using \eqref{dysy} we get
$$\frac{d}{dt} {\cal E}(t)=-\frac{\a}{t}\|\dot{x}(t)\|^2-\b\<\n^2 g(x(t))\dot{x}(t),\dot{x}(t)\>+\frac12\dot{\e}(t)\|x(t)\|^2,$$
and, since $\e$ is nonincreasing and $\n^2 g(x(t))$ is positive semidefinite, we obtain that 
$$\frac{d}{dt} {\cal E}(t) \le 0 \quad \forall t \geq t_0.$$
Consequently, ${\cal E}$ is nonincreasing, hence
$$\frac12\|\dot x(t)\|^2+g(x(t))+\frac12\e({t})\|x(t)\|^2\le \frac12\|\dot x(t_0)\|^2+g(x(t_0))+\frac12\e(t_0)\|x(t_0)\|^2 \ \quad \forall t \geq t_0.$$
From the fact that $g$ is bounded from below we obtain that $\dot{x}$ is bounded on $[t_0, T_{\max})$. Let $\|\dot{x}\|_\infty :=\sup_{t\in [t_0, T_{\max})}\|\dot{x}(t)\| < +\infty.$

Since $\|x(t)-x(t')\|\le\|\dot{x}\|_\infty |t-t'|$ for all $t,t' \in [t_0, T_{\max})$, there exists $\lim_{t\To T_{\max}}x(t)$, which shows that  $x$ is bounded on $[t_0, T_{\max})$. Since $\dot{x}(t)+\b\n g(x(t))=y(t)$ for all $t \in [t_0, T_{\max})$ and $\nabla g$ is Lipschitz continuous on bounded sets, it yields that $y$ is also bounded on  $[t_0,T_{\max})$. Hence $\lim_{t \rightarrow T_{\max}} \|x(t)\| + \|y(t)\|$ cannot be $+\infty$, thus $T_{\max}=+\infty,$ which completes the proof.
\end{proof}

\section{Asymptotic analysis}\label{sec3}

In this section we will show to which extent different assumptions we impose to the Tikhonov parametrization  $t \mapsto \e(t)$ influence the asymptotic behaviour of the trajectory $x$ generated by the dynamical system \eqref{dysy}. In particular, we are looking at the convergence of the function $g$ along the trajectory and the weak convergence of the trajectory.

We recall that the asymptotic analysis of the system \eqref{dysy} is carried out in the framework of the general assumptions stated in the introduction.

We start with a result which provides a setting that guarantees the convergence of $g(x(t))$ to $\min g$ as $t \rightarrow +\infty$.

\begin{theorem}\label{conv} Let $x$ be the unique global $C^2$-solution of \eqref{dysy}. Assume that
one of the following conditions is fulfilled:
\begin{itemize}
\item[{\rm (a)} ] $\int_{t_0}^{+\infty}\frac{\e(t)}{t}dt<+\infty$ and there exist $a > 1$ and $t_1\ge t_0$ such that
$$\dot{\e}(t)\le-\frac{a\b}{2}\e^2(t) \mbox{ for every }t\ge t_1;$$
\item[{\rm (b)}] there exists $a >0$ and $t_1\ge t_0$ such that
$$\e(t)\le\frac{a}{t} \mbox{ for every }t\ge t_1.$$
\end{itemize}
If $\a\ge 3$, then
$$\lim_{t \rightarrow + \infty}g(x(t))=\min g.$$
\end{theorem}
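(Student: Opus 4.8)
The plan is to run a Lyapunov argument through the energy functional ${\cal E}(t)=\tfrac12\|\dot x(t)\|^2+g(x(t))+\tfrac12\e(t)\|x(t)\|^2$ already shown to be nonincreasing in the proof of Theorem \ref{rewrite}. Since $g$ is bounded from below and $\e\ge0$, ${\cal E}$ is bounded from below, hence converges to some $\ell\ge\min g$; and since $\tfrac12\|\dot x(t)\|^2+\tfrac12\e(t)\|x(t)\|^2\ge0$, we have $\min g\le g(x(t))\le{\cal E}(t)$ for every $t\ge t_0$, so it is enough to prove $\ell=\min g$ and then conclude $g(x(t))\to\min g$ by squeezing. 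Because $t\mapsto{\cal E}(t)-\min g$ is nonnegative and nonincreasing, $\ell=\min g$ will follow once I show
$$\int_{t_0}^{+\infty}\frac{{\cal E}(t)-\min g}{t}\,dt<+\infty,$$
since then $\big({\cal E}(t)-\min g\big)\ln 2\le\int_{t/2}^{t}\tfrac1s\big({\cal E}(s)-\min g\big)\,ds\to0$ as $t\to+\infty$ (tail of a convergent integral).

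Next I would split ${\cal E}(t)-\min g=\tfrac12\|\dot x(t)\|^2+\big(g(x(t))-\min g\big)+\tfrac12\e(t)\|x(t)\|^2$ and estimate the $\tfrac1t$-weighted integral of each term. The first is immediate: integrating $\tfrac{d}{dt}{\cal E}(t)=-\tfrac{\a}{t}\|\dot x(t)\|^2-\b\<\n^2 g(x(t))\dot x(t),\dot x(t)\>+\tfrac12\dot\e(t)\|x(t)\|^2$ and using $\dot\e\le0$, $\n^2 g(x(t))\succeq0$ gives $\a\int_{t_0}^{+\infty}\tfrac1t\|\dot x(t)\|^2\,dt\le{\cal E}(t_0)-\min g<+\infty$. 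For the third term I would first prove that the trajectory is bounded; then $\int_{t_0}^{+\infty}\tfrac1t\e(t)\|x(t)\|^2\,dt\le\|x\|_\infty^2\int_{t_0}^{+\infty}\tfrac{\e(t)}{t}\,dt<+\infty$, the last integral being finite by hypothesis in case (a) and because $\e(t)/t\le a/t^2$ for $t\ge t_1$ in case (b). Thus the whole proof reduces to establishing (i) boundedness of $x(\cdot)$ and (ii) $\int_{t_0}^{+\infty}\tfrac1t\big(g(x(t))-\min g\big)\,dt<+\infty$.

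For (i) and (ii) I would build an anchored Lyapunov functional. Passing to the first-order reformulation \eqref{dynzx} by setting $w(t):=\dot x(t)+\b\n g(x(t))$ (so that the Hessian damping becomes the exact derivative $\b\tfrac{d}{dt}\n g(x(t))$), fixing $x^*\in\argmin g$, and using a functional of Su--Boyd--Cand\`{e}s type adapted to the present setting, e.g.
$$\mathcal{W}(t)=t^2\big(g(x(t))-\min g\big)+\tfrac12\big\|(\a-1)(x(t)-x^*)+t\,w(t)\big\|^2+\big(\text{lower-order and Tikhonov corrections}\big),$$
I would differentiate $\mathcal{W}$ along the trajectory using \eqref{dynzx}, the convexity inequality $\<\n g(x(t)),x(t)-x^*\>\ge g(x(t))-\min g\ge0$, the positive semidefiniteness of $\n^2 g$, and $\a\ge3$. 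The terms generated by the Tikhonov perturbation $\e(t)x(t)$ are then absorbed: in case (b) via $\e(t)\le a/t$, which puts them on the scale of the already-controlled terms; in case (a) via the differential inequality $\dot\e(t)\le-\tfrac{a\b}{2}\e^2(t)$, whose form is precisely what dominates the quadratic-in-$\e$ cross term coming from $\b\e(t)\n g(x(t))$, the remaining residual being integrable because $\int\e(t)/t\,dt<+\infty$. This makes $\mathcal{W}$ bounded; boundedness of $t^2\big(g(x(t))-\min g\big)$ gives $g(x(t))-\min g=O(1/t^2)$, hence (ii), and boundedness of $(\a-1)(x(t)-x^*)+t\,w(t)$ gives (i) by a standard integrating-factor argument. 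Combining with the first paragraph completes the proof.

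The step I expect to be the main obstacle is the construction of $\mathcal{W}$ together with the case analysis: pinning down the lower-order and Tikhonov correction terms so that $\dot{\mathcal{W}}$ can be given a sign, and --- in case (a) with $\b>0$ --- matching the quadratic-in-$\e$ term that surfaces in the computation to the hypothesis $\dot\e\le-\tfrac{a\b}{2}\e^2$, where the constants $a>1$ and the factor $\b/2$ are decisive. A secondary point, already used above, is that for $\b>0$ the computation must be carried out in the $(x,w)$ variables of \eqref{dynzx}, exploiting $\b\n^2 g(x(t))\dot x(t)=\b\tfrac{d}{dt}\n g(x(t))$.
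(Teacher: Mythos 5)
Your Lyapunov functional $\mathcal{W}$ is essentially the paper's $\mathcal{E}_b$ (with $w(t)=\dot x(t)+\b\n g(x(t))$ and anchoring at $x^*$), and you have correctly identified how the two hypotheses enter: (b) through $\e(t)\le a/t$, and (a) through matching $\dot\e\le-\tfrac{a\b}{2}\e^2$ against the quadratic-in-$\e$ term produced by the cross product $-\b\e(t)t^2\<\n g(x(t)),x(t)\>$. The gap is in the final step. After anchoring via the strong convexity of $x\mapsto g(x)+\tfrac{\e(t)}{2}\|x\|^2$, the Tikhonov residual that survives in $\dot{\mathcal{W}}(t)$ is of order $t\,\e(t)\|x^*\|^2$, not of order $\e(t)/t$; so integrating only yields $\mathcal{W}(T)\le \mathcal{W}(t_2)+C\int_{t_2}^T t\e(t)\,dt$. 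Under the hypotheses of this theorem $\int_{t_0}^{+\infty}t\e(t)\,dt$ may well diverge: e.g.\ $\e(t)=c/t$ with $c$ small satisfies both (a) and (b) and $\int\e(t)/t\,dt<+\infty$, yet $\int t\e(t)\,dt=+\infty$ (and for $\b=0$ even $\e(t)=1/\ln^2 t$ is admissible). Hence your claims that $\mathcal{W}$ is bounded, that the trajectory is bounded, and that $g(x(t))-\min g=O(1/t^2)$ are not obtainable here; they are exactly the conclusions of Theorem \ref{rates}, which requires the strictly stronger assumption $\int_{t_0}^{+\infty}t\e(t)\,dt<+\infty$. With them gone, both ingredients of your reduction through ${\cal E}$ --- the integrability of $\tfrac1t\big(g(x(t))-\min g\big)$ and of $\tfrac1t\e(t)\|x(t)\|^2$ --- are out of reach.

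The paper closes this gap without ever proving boundedness: from $\dot{\mathcal{E}_b}(t)\le l\,t\tfrac{\e(t)}{2}\|x^*\|^2$ it integrates, uses $\mathcal{E}_b(T)\ge (T^2-\b(b+2-\a)T)(g(x(T))-\min g)$, divides by $T^2-\b(b+2-\a)T$, and applies the Ces\`aro-type Lemma \ref{nullimit} with $\varphi(t)=t^2$ and $f(t)=\e(t)/t$ to get
$$\lim_{T\To+\infty}\frac{1}{T^2}\int_{t_2}^{T}t\e(t)\,dt=\lim_{T\To+\infty}\frac{1}{T^2}\int_{t_2}^{T}t^2\,\frac{\e(t)}{t}\,dt=0,$$
which gives $g(x(T))\To\min g$ directly (for $\a=3$ an extra integrating factor $t/(t-\b)$ is needed first). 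So the fix is not a better choice of correction terms in $\mathcal{W}$, but replacing "$\mathcal{W}$ is bounded" by this division-plus-averaging argument; once you do that, the whole first layer of your proof (the squeeze through the nonincreasing energy ${\cal E}$) becomes unnecessary.
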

\begin{proof} 
Let be $x^* \in \argmin g$ and $2\le b\le \a-1$ be fixed. We introduce the following energy functional $\mathcal{E}_{b} : [t_0, +\infty) \rightarrow \R$,
\begin{align}\label{energy0}
\mathcal{E}_{b}(t)=& \ (t^2-\b(b+2-\a)t)\left(g(x(t))-\min g\right)+\frac{t^2\e(t)}{2}\|x(t)\|^2\nonumber\\
&\ +\frac12\|b(x(t)-x^*)+t(\dot{x}(t)+\b\n g(x(t)))\|^2+\frac{b(\a-1-b)}{2}\|x(t)-x^*\|^2.
\end{align}
For every $t \geq t_0$ it holds
\begin{align}\label{temp1}
\dot{\mathcal{E}_{b}}(t) = & \ (2t-\b(b+2-\a))\left(g(x(t))-\min g\right) \nonumber \\
&\ +(t^2-\b(b+2-\a)t)\<\n g(x(t),\dot{x}(t)\>+\frac{t^2\dot{\e}(t)+2t\e(t)}{2}\|x(t)\|^2+t^2\e(t)\<\dot{x}(t),x(t)\>\nonumber\\
&\ +\<(b+1)\dot{x}(t)+\b\n g(x(t))+t(\ddot{x}(t)+\b\n^2g(x(t))\dot{x}(t)), b(x(t)-x^*)+t(\dot{x}(t)+\b\n g(x(t)))\>\nonumber\\
&\ +b(\a-1-b)\<\dot{x}(t),x(t)-x^*\>.
\end{align}
Now, by using \eqref{dysy}, we get for every $t \geq t_0$
\begin{align}\label{temp2}
&\<(b+1)\dot{x}(t)+\b\n g(x(t))+t(\ddot{x}(t)+\b\n^2g(x(t))\dot{x}(t)), b(x(t)-x^*)+t(\dot{x}(t)+\b\n g(x(t)))\> \nonumber\\
= & \ \<(b+1-\a)\dot{x}(t)+(\b-t)\n g(x(t))-t\e(t)x(t), b(x(t)-x^*)+t(\dot{x}(t)+\b\n g(x(t)))\>\nonumber\\
= & \ b(b+1-\a)\<\dot{x}(t),x(t)-x^*\>+(b+1-\a)t\|\dot{x}(t)\|^2+(-t^2+\b (b+2-\a)t\<\dot{x}(t),\n g(x(t))\>\nonumber\\
&+(\b^2 t-\b t^2)\|\n g(x(t))\|^2-\e(t)t^2\<\dot{x}(t),x(t)\>-\b\e(t)t^2\<\n g(x(t)),x(t)\>\nonumber\\
&-bt\left\<\left(1-\frac{\b}{t}\right)\n g(x(t))+\e(t)x(t), x(t)-x^*\right\>. 
\end{align}

Let be $t_0':=\max(\b,t_0)$. For all $t \geq t_0'$  the function $g_t : \mathcal{H} \rightarrow \R, g_t(x)= \left(1-\frac{\b}{t}\right)g(x)+\frac{\e(t)}{2}\|x\|^2,$ is strongly convex, thus, one has
$$g_t(y)-g_t(x)\ge \<\n g_t(x),y-x\>+\frac{\e(t)}{2}\|y-x\|^2 \ \forall x,y\in\mathcal{H}.$$
By taking $x:=x(t)$ and $y:=x^*$ we get for every $t\ge t_0'$ 
\begin{align}\label{temp3}
-bt\left\<\left(1-\frac{\b}{t}\right)\n g(x(t))+\e(t)x(t), x(t)-x^*\right\>\le& -bt\left(1-\frac{\b}{t}\right)(g(x(t))-\min g) -bt\frac{\e(t)}{2}\|x(t)\|^2 \nonumber\\
&-bt\frac{\e(t)}{2}\|x(t)-x^*\|^2 +bt\frac{\e(t)}{2}\|x^*\|^2.
\end{align}

From \eqref{temp1}, \eqref{temp2} and \eqref{temp3} it follows that for every $t\ge t_0'$ it holds
\begin{align}\label{deriv}
\dot{\mathcal{E}_{b}}(t) \le & \ \big((2-b)t-\b(2-\a)\big)\left(g(x(t))-\min g\right)+bt\frac{\e(t)}{2}\|x^*\|^2 \nonumber \\
& \ +\left(t^2\frac{\dot{\e}(t)}{2}+(2-b)t\frac{\e(t)}{2}\right)\|x(t)\|^2-bt\frac{\e(t)}{2}\|x(t)-x^*\|^2\nonumber\\
& \ +(b+1-\a)t\|\dot{x}(t)\|^2+(\b^2 t-\b t^2)\|\n g(x(t))\|^2 -\b\e(t)t^2\<\n g(x(t)),x(t)\>.
\end{align}
At this point we treat the situations $\a >3$ and $\a=3$ separately.\vspace{1ex}

{\bf The case $\alpha > 3$ and $2<b< \a-1$.} We will carry out the analysis by addressing the settings provided by the conditions (a) and (b) separately.

{\it Condition {\rm (a)} holds:} Assuming that condition {\rm (a)} holds, there exist $a >1$ and $t_1\ge t_0'$ such that
$$\dot{\e}(t)\le-\frac{a\b}{2}\e^2(t) \ \mbox{ for every }t\ge t_1.$$
Using that
\begin{align}\label{temp5}
-\b\e(t)t^2\<\n g(x(t)),x(t)\> \leq  \frac{\b t^2}{a}\|\n g(x(t))\|^2+\frac{a\b\e^2(t)t^2}{4}\|x(t)\|^2,
\end{align}
\eqref{deriv} leads to the following estimate
\begin{align}\label{deriv1}
\dot{\mathcal{E}_{b}}(t) \le & \ \big((2-b)t-\b(2-\a)\big)\left(g(x(t))-\min g\right)+bt\frac{\e(t)}{2}\|x^*\|^2 \nonumber \\
& \ +\left(t^2\frac{\dot{\e}(t)}{2}+(2-b)t\frac{\e(t)}{2}+\frac{a\b\e^2(t)t^2}{4}\right)\|x(t)\|^2-bt\frac{\e(t)}{2}\|x(t)-x^*\|^2\nonumber\\
& \ +(b+1-\a)t\|\dot{x}(t)\|^2+\left(\b^2 t-\b\left(1-\frac{1}{a}\right) t^2\right)\|\n g(x(t))\|^2,
\end{align}
which holds for every $t\ge t_1.$

Since $a> 1$ and $b > 2$, we notice that  for every $t\ge t_1$ it holds
$$t^2\frac{\dot{\e}(t)}{2}+(2-b)t\frac{\e(t)}{2}+\frac{a\b\e^2(t)t^2}{4}\le 0.$$
On the other hand, we have that
$$\b^2 t-\b\left(1-\frac{1}{a}\right) t^2\le -\b\frac{a-1}{2a}t^2\mbox{ for every }t\ge \frac{2a\b}{a-1}$$
and
$$(2-b)t-\b(2-\a)\le 0\mbox{ for every } t\ge\frac{\b(\a-2)}{b-2}.$$

We define $t_2:=\max\left(t_1,\frac{2a\b}{a-1},\frac{\b(\a-2)}{b-2}\right)$. According to \eqref{deriv1}, it holds for every $t\ge t_2$
\begin{align}\label{deriv2}
& \ \dot{\mathcal{E}_{b}}(t) - \big((2-b)t-\b(2-\a)\big)\left(g(x(t))-\min g\right)-\left(t^2\frac{\dot{\e}(t)}{2}+(2-b)t\frac{\e(t)}{2}+\frac{a\b\e^2(t)t^2}{4}\right)\|x(t)\|^2 \nonumber \\
& \ +bt\frac{\e(t)}{2}\|x(t)-x^*\|^2+(\a-1-b)t\|\dot{x}(t)\|^2+\b\frac{a-1}{2a}t^2\|\n g(x(t))\|^2\nonumber\\
\le&  \ bt\frac{\e(t)}{2}\|x^*\|^2.
\end{align}

{\it Condition {\rm (b)} holds:} Assuming now that condition {\rm (b)} holds, there exist $a >0$ and $t_1\ge t_0'$ such that
$$\e(t)\le\frac{a}{t} \ \mbox{ for every }t\ge t_1.$$
Further, the monotonicity of $\n g$ and the fact that $\n g(x^*)=0$ implies that
$$\<\n g(x(t)),x(t)-x^*\>\ge 0 \mbox{ for every }t\ge t_0.$$
Using that
\begin{align}\label{temp5b}
-\b\e(t)t^2\<\n g(x(t)),x(t)\>\le -\b\e(t)t^2\<\n g(x(t)),x^*\> \leq \frac{\b t^3\e(t)}{2a}\|\n g(x(t))\|^2+\frac{a\b\e(t)t}{2}\|x^*\|^2,
\end{align}
\eqref{deriv} leads to the following estimate
\begin{align}\label{deriv1b}
\dot{\mathcal{E}_{b}}(t) \le & \ \big((2-b)t-\b(2-\a)\big)\left(g(x(t))-\min g\right)+(b+a\b)t\frac{\e(t)}{2}\|x^*\|^2 \nonumber \\
& \ +\left(t^2\frac{\dot{\e}(t)}{2}+(2-b)t\frac{\e(t)}{2}\right)\|x(t)\|^2-bt\frac{\e(t)}{2}\|x(t)-x^*\|^2\nonumber\\
& \ +(b+1-\a)t\|\dot{x}(t)\|^2+\left(\b^2 t-\b t^2+\frac{\b t^3\e(t)}{2a}\right)\|\n g(x(t))\|^2
\end{align}
for every $t\ge t_1$.

Since $b > 2$, we have that for every $t\ge t_1$ it holds
$$t^2\frac{\dot{\e}(t)}{2}+(2-b)t\frac{\e(t)}{2}\le 0.$$
On the other hand, since
$$-\b t^2+\frac{\b t^3\e(t)}{2a}\le -\frac{\b}{2}t^2$$ holds for every $t\ge t_1$, it follows that
\begin{equation}\label{usi}\b^2 t-\b t^2+\frac{\b t^3\e(t)}{2a}\le -\frac{\b}{4}t^2 \ \mbox{ for every }t\ge \max(t_1,4\b).
\end{equation}
We recall that
$$(2-b)t-\b(2-\a)\le 0\mbox{ for every } t\ge\frac{\b(\a-2)}{b-2}.$$
We define $t_2:=\max\left(t_1,4\b,\frac{\b(\a-2)}{b-2}\right).$ According to \eqref{deriv1b}, it holds for every $t \geq t_2$
\begin{align}\label{deriv2b}
& \  \dot{\mathcal{E}_{b}}(t) - ((2-b)t-\b(2-\a))\left(g(x(t))-\min g\right)-\left(t^2\frac{\dot{\e}(t)}{2}+(2-b)t\frac{\e(t)}{2}\right)\|x(t)\|^2 \nonumber \\
&\ +bt\frac{\e(t)}{2}\|x(t)-x^*\|^2+(\a-1-b)t\|\dot{x}(t)\|^2+\frac{\b}{4}t^2\|\n g(x(t))\|^2\nonumber\\
\le & \ (b+a\b)t\frac{\e(t)}{2}\|x^*\|^2.
\end{align}

From now on we will treat the two cases together. According to \eqref{deriv2}, in case {\rm (a)}, and to \eqref{deriv2b}, in case {\rm (b)}, we  obtain
$$\dot{\mathcal{E}_{b}}(t)\le lt\frac{\e(t)}{2}\|x^*\|^2$$
for every $t\ge t_2$, where $l:=b\mbox{ and } t_2=\max\left(t_1,\frac{2a\b}{a-1},\frac{\b(\a-2)}{b-2}\right)$, in case {\rm (a)}, and $l:=b+a\b\mbox{ and }t_2=\max\left(t_1,4\b,\frac{\b(\a-2)}{b-2}\right)$ in case {\rm (b)}.

By integrating the latter inequality on the interval $[t_2,T]$, where $T \geq t_2$ is arbitrarily chosen, we obtain
$$\mathcal{E}_{b}(T)\le \mathcal{E}_{b}(t_2)+\frac{l\|x^*\|^2}{2}\int_{t_2}^T t\e(t)dt.$$
On the other hand,
$$\mathcal{E}_{b}(t)\ge (t^2-\b(b+2-\a)t)\left(g(x(T))-\min g\right) \ \forall t \geq t_0,$$ hence, for every $T\ge\max( \b(b+2-\a),t_3)$ we get 
$$0 \leq g(x(T))-\min g \le \frac{\mathcal{E}_{b}(t_2)}{T^2-\b(b+2-\a)T}+\frac{l\|x^*\|^2}{2}\frac{1}{T^2-\b(b+2-\a)T}\int_{t_2}^T t\e(t)dt.$$
Obviously,
$$\lim_{T\To+\infty}\frac{\mathcal{E}_{b}(t_3)}{T^2-\b(b+2-\a)T}=0.$$
Further,  Lemma \ref{nullimit} applied to the functions $\varphi(t)=t^2$ and $f(t)=\frac{\e(t)}{t}$ provides
$$\lim_{T\To+\infty}\frac{1}{T^2}\int_{t_2}^T t^2\frac{\e(t)}{t}dt=0,$$
hence,
$$\lim_{T\To+\infty}\frac{1}{T^2-\b(b+2-\a)T}\int_{t_2}^T t\e(t)dt=0$$
and, consequently,
$$\lim_{T\To+\infty}g(x(T))=\min g.$$

{\bf The case $\a=3$ and $b=2$.} In this case the energy functional reads
$$\mathcal{E}_{2}(t)=(t^2-\b t)\left(g(x(t))-\min g\right)+\frac{t^2\e(t)}{2}\|x(t)\|^2+\frac12\|2(x(t)-x^*)+t(\dot{x}(t)+\b\n g(x(t)))\|^2$$
 for every $ t \geq t_0$. We will address again the settings provided by the conditions {\rm (a)} and {\rm (b)} separately.\vspace{1ex}

{\it Condition {\rm (a)} holds:} Relation \eqref{deriv1} becomes
\begin{align}\label{deriv00}
\dot{\mathcal{E}_{2}}(t) \le & \ \b\left(g(x(t))-\min g\right)+t\e(t)\|x^*\|^2+\left(t^2\frac{\dot{\e}(t)}{2}+\frac{a\b\e^2(t)t^2}{4}\right)\|x(t)\|^2-t\e(t)\|x(t)-x^*\|^2\nonumber\\
&\ +\left(\b^2 t-\b\left(1-\frac{1}{a}\right) t^2\right)\|\n g(x(t))\|^2\nonumber
\end{align}
for every $t \geq t_1$. Consequently, for $t_3:= \max\left(t_1,\frac{\b a}{a-1}\right)$, we have
\begin{equation}\label{3a}\dot{\mathcal{E}_{2}}(t)\le \b\left(g(x(t))-g^*\right)+t\e(t)\|x^*\|^2
\end{equation}
for every $t \geq t_3$. After multiplication with $(t-\b)$, it yields
$$
t(t-\b)\dot{\mathcal{E}_{2}}(t)\le  \b t(t-\b)\left(g(x(t))-g^*\right)+t^2(t-\b)\e(t)\|x^*\|^2\le \b\mathcal{E}_{2}(t)+t^2(t-\b)\e(t)\|x^*\|^2$$
for every $t\ge t_3$. Dividing by $(t-\b)^2$ we obtain 
$$\frac{t}{t-\b}\dot{\mathcal{E}_{2}}(t)\le \frac{\b}{(t-\b)^2}\mathcal{E}_{2}(t)+\frac{t^2}{t-\b}\e(t)\|x^*\|^2$$
or, equivalently,
\begin{equation}\label{deri00}
\frac{d}{dt}\left(\frac{t}{t-\b}\mathcal{E}_{2}(t)\right)\le \frac{t^2}{t-\b}\e(t)\|x^*\|^2 \mbox{ for every } t\ge t_3.
\end{equation}

{\it Condition {\rm (b)} holds:} We define  $t_3:=\max\left(t_1,4\b\right)$. Relation \eqref{deriv1b} becomes
\begin{equation}\label{3b}\dot{\mathcal{E}_{2}}(t)\le \b\left(g(x(t))-g^*\right)+\frac{2+a\b}{2}t\e(t)\|x^*\|^2,
\end{equation}
for every $t\ge t_3$. Repeating the above steps for the inequality \eqref{3b} we obtain
\begin{equation}\label{deri00b}
\frac{d}{dt}\left(\frac{t}{t-\b}\mathcal{E}_{2}(t)\right)\le \frac{2+a_1\b}{2}\frac{t^2}{t-\b}\e(t)\|x^*\|^2 \mbox{ for every } t\ge t_3.
\end{equation}

From now on we will treat the two cases together. According to \eqref{deri00}, in case {\rm (a)}, and to \eqref{deri00b}, in case {\rm (b)}, we  obtain
$$\frac{d}{dt}\left(\frac{t}{t-\b}\mathcal{E}_{2}(t)\right)\le l\frac{t^2}{t-\b}\e(t)\|x^*\|^2$$
for every $t\ge t_3$, where $l:=1\mbox{ and } t_3=\max\left(t_1,\frac{\b(\a-1)}{b-2}\right)$, in case {\rm (a)}, and $l:=\frac{2+a\b}{2}\mbox{ and }t_3=\max(t_1,4\b)$ in case {\rm (b)}.

By integrating the latter inequality on an interval $[t_3,T]$, where $T \geq t_3$ is arbitrarily chosen, we obtain
$$\frac{T}{T-\b}\mathcal{E}_{2}(T)\le \frac{t_3}{t_3-\b}\mathcal{E}_{2}(t_3)+l\|x^*\|^2\int_{t_3}^T \frac{t^2}{t-\b}\e(t) dt.$$
On the other hand,
$$\mathcal{E}_{2}(t)\ge (t^2-\b t)\left(g(x(t))-\min g\right)$$ for every $t \ge t_0$, hence, for every
$T \ge \max( \b,t_3)=t_3$ we get
$$0 \leq g(x(T))-\min g \le \frac{1}{T^2} \frac{t_3}{t_3-\b}\mathcal{E}_{2}(t_3)+l\|x^*\|^2\frac{1}{T^2}\int_{t_3}^T \frac{t^2}{t-\b}\e(t)dt.$$
Obviously,
$$\lim_{T\To+\infty}\frac{1}{T^2} \frac{t_3}{t_3-\b}\mathcal{E}_{2}(t_3)=0.$$
Lemma \ref{nullimit}, applied this time to the functions $\varphi(t)=\frac{t^3}{t-\b}$ and $f(t)=\frac{\e(t)}{t}$, yields
$$\lim_{T\To+\infty} \frac{T-\b}{T^3}\int_{t_3}^T \frac{t^3}{t-\b}\frac{\e(t)}{t}dt=0.$$
Consequently,
$$\lim_{T\To+\infty}\frac{1}{T^2}\int_{t_3}^T \frac{t^2}{t-\b}\e(t)dt=0,$$
hence
$$\lim_{T\To+\infty}g(x(T))=\min g.$$
\end{proof}

\begin{remark}\label{rema}
One can easily notice that, in case $\beta >0$, the fact that there exist $a >1$ and $t_1 \geq t_0$ such that $\dot{\e}(t)\le-\frac{a\b}{2}\e^2(t)$ for every $t\ge t_1$ implies that $\int_{t_0}^{+\infty}\frac{\e(t)}{t}dt<+\infty$.
\end{remark}

The next theorem shows that, by strengthening the integrability condition $\int_{t_0}^{+\infty}\frac{\e(t)}{t}dt<+\infty$ (which is actually required in both settings {\rm (a)} and {\rm (b)} of Theorem \ref{conv}),  a rate of $\mathcal{O}(1/t^2)$ ca be guaranteed for the convergence of $g(x(t))$ to $\min g$.

\begin{theorem}\label{rates} Let $x$ be the unique global $C^2$-solution of \eqref{dysy}. Assume that
$$\int_{t_0}^{+\infty}t\e(t)dt<+\infty$$
and that one of the following conditions is fulfilled:
\begin{itemize}
\item[{\rm (a)}] there exist $a>1$ and $t_1\ge t_0$ such that
$$\dot{\e}(t)\le-\frac{a\b}{2}\e^2(t)\mbox{ for every }t\ge t_1;$$
\item[{\rm (b)}] there exist $a>0$ and $t_1\ge t_0$ such that
$$\e(t)\le\frac{a}{t}\mbox{ for every }t\ge t_1.$$
\end{itemize}
If $\a\ge 3$, then
$$g(x(t))-\min g =  \mathcal{O}\left(\frac{1}{t^2}\right).$$
In addition, if $\a>3$, then the trajectory $x$ is bounded and
$$t\left(g(x(t))-\min g\right),\,t\|\dot{x}(t)\|^2,\,t\e(t)\|x(t)-x^*\|^2,\,t\e(t)\|x(t)\|^2,\,t^2\|\n g(x(t))\|^2\in L^1([t_0,+\infty),\R)$$
for every arbitrary $x^* \in \argmin g$.
\end{theorem}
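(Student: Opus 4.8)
The plan is to recycle the energy functionals and the differential inequalities already obtained in the proof of Theorem~\ref{conv}; the only new ingredient is the stronger integrability hypothesis $\int_{t_0}^{+\infty}t\e(t)\,dt<+\infty$. For $\a>3$ I would fix some $2<b<\a-1$ and use $\mathcal{E}_{b}$; for $\a=3$ I would take $b=2$ and work with the quantity $t\mapsto\frac{t}{t-\b}\mathcal{E}_{2}(t)$.

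\emph{The rate $\mathcal{O}(1/t^2)$.} For $\a>3$, the proof of Theorem~\ref{conv} supplies, in both settings (a) and (b), a time $t_2\ge t_0$ and a constant $l>0$ with $\dot{\mathcal{E}_{b}}(t)\le \frac{l}{2}\,t\e(t)\|x^*\|^2$ for every $t\ge t_2$. Integrating this on $[t_2,T]$ and invoking $\int_{t_0}^{+\infty}t\e(t)\,dt<+\infty$ shows that $\mathcal{E}_{b}$ is bounded from above on $[t_0,+\infty)$; since $\mathcal{E}_{b}(t)\ge\big(t^2-\b(b+2-\a)t\big)\big(g(x(t))-\min g\big)$ and the leading coefficient is eventually positive, this gives $g(x(t))-\min g=\mathcal{O}(1/t^2)$. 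For $\a=3$ the proof of Theorem~\ref{conv} produced instead the inequality $\frac{d}{dt}\big(\frac{t}{t-\b}\mathcal{E}_{2}(t)\big)\le l\,\frac{t^2}{t-\b}\e(t)\|x^*\|^2$ for $t\ge t_3$ (see \eqref{deri00} and \eqref{deri00b}); observing that $\frac{t^2}{t-\b}\le 2t$ for $t\ge 2\b$, the right-hand side is again integrable, so $\frac{t}{t-\b}\mathcal{E}_{2}(t)$ and hence $\mathcal{E}_{2}(t)$ stay bounded above, and $\mathcal{E}_{2}(t)\ge(t^2-\b t)\big(g(x(t))-\min g\big)$ yields the rate (for $\b=0$ the factor $\frac{t}{t-\b}$ is identically $1$ and the argument simplifies).

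\emph{The case $\a>3$.} Boundedness of $x$ follows from $\mathcal{E}_{b}(t)\ge\frac{b(\a-1-b)}{2}\|x(t)-x^*\|^2$, all the other summands of $\mathcal{E}_{b}(t)$ being nonnegative for $t$ large, together with the upper bound on $\mathcal{E}_{b}$ just established and continuity of $x$ on $[t_0,+\infty)$. For the integrability assertions I would return to the sharp inequalities \eqref{deriv2} (setting (a)) and \eqref{deriv2b} (setting (b)) \emph{before} their nonnegative left-hand terms are discarded, move all of them to the left, and integrate over $[t_2,T]$; since $\mathcal{E}_{b}(T)\ge0$ for $T$ large and $\int_{t_0}^{+\infty}t\e(t)\,dt<+\infty$, letting $T\to+\infty$ forces the integrals over $[t_2,+\infty)$ of $t(g(x(t))-\min g)$ (whose coefficient $(b-2)t-\b(\a-2)$ dominates $c\,t$ for $t$ large), of $t\|\dot x(t)\|^2$, of $t\e(t)\|x(t)-x^*\|^2$, of $t\e(t)\|x(t)\|^2$ (the $\|x(t)\|^2$-coefficient being absorbable since $\dot\e\le0$ and, in setting (a), condition (a) cancels the $\e^2$-term), and---when $\b>0$---of $t^2\|\n g(x(t))\|^2$ (from the term $\frac{\b(a-1)}{2a}t^2\|\n g(x(t))\|^2$ in (a), resp. $\frac{\b}{4}t^2\|\n g(x(t))\|^2$ in (b)) to all be finite. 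Passing from $[t_2,+\infty)$ to $[t_0,+\infty)$ is harmless because $[t_0,t_2]$ is compact.

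\emph{The term $t^2\|\n g(x(t))\|^2$ when $\b=0$, and the main obstacle.} If $\b=0$ then the gradient-squared coefficient $\b^2t-\b t^2$ and the cross term $\b\e(t)t^2\<\n g(x(t)),x(t)\>$ both disappear from \eqref{deriv}, so this last integrability cannot be read off from $\mathcal{E}_{b}$; but in that case \eqref{dysy} is exactly the system \eqref{dysy-sbc-tikh} studied in \cite{ACR}, where it is available. I expect no genuine conceptual difficulty beyond what is already contained in the proof of Theorem~\ref{conv}: the only real work is the bookkeeping---tracking the thresholds $t_1,t_2,t_3,\dots$ and the signs of the various coefficients across the two settings (a)/(b) and the two regimes $\a=3$ and $\a>3$---and verifying carefully that each quantity appearing in the $L^1$ claim is bounded below by a nonnegative multiple of the corresponding good term on the left-hand side of \eqref{deriv2}/\eqref{deriv2b}.
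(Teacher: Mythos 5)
Your proposal is correct and follows essentially the same route as the paper: it reuses the energy functional $\mathcal{E}_{b}$ and the differential inequalities \eqref{deriv2}, \eqref{deriv2b}, \eqref{deri00}, \eqref{deri00b} from the proof of Theorem \ref{conv}, integrates them against the strengthened hypothesis $\int_{t_0}^{+\infty}t\e(t)\,dt<+\infty$ to bound $\mathcal{E}_{b}$ (resp.\ $\frac{t}{t-\b}\mathcal{E}_{2}$), and then reads off the rate, the boundedness of the trajectory and the $L^1$ estimates from the retained nonnegative terms. Your explicit observation that the coefficient of $t^2\|\n g(x(t))\|^2$ degenerates when $\b=0$, so that this particular integrability must be imported from \cite{ACR}, is a point the paper's own proof passes over silently; otherwise the two arguments coincide.
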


\begin{proof}
Let be $x^* \argmin g$ and $2 \leq b \leq \alpha-1$ fixed. We will use the energy functional introduced in the proof of the previous theorem and some of the estimate we derived for it. We will treat again the situations $\a >3$ and $\a=3$ separately.

{\bf The case $\alpha > 3$ and $2<b< \a-1$.} As we already noticed in the proof of Theorem \ref{conv}, according to \eqref{deriv2}, in case {\rm (a)}, and to \eqref{deriv2b}, in case {\rm (b)}, we have
$$\dot{\mathcal{E}_{b}}(t)\le lt\frac{\e(t)}{2}\|x^*\|^2 \ \mbox{for every} \ t \ge t_2,$$
where $l:=b\mbox{ and } t_2=\max\left(t_1,\frac{2a\b}{a-1},\frac{\b(\a-2)}{b-2}\right)$, in case {\rm (a)}, and $l:=b+a\b\mbox{ and }t_2=\max\left(t_1,4\b,\frac{\b(\a-2)}{b-2}\right)$ in case {\rm (b)}.

Using that $t\e(t)\in L^1([t_0,+\infty),\R)$ and that $t \mapsto \mathcal{E}_{b}(t)$ is bounded from below, from Lemma \ref{fejer-cont1} it follows that the limit $\lim_{t\To+\infty}\mathcal{E}_{b}(t)$ exists.
Consequently, $t \mapsto \mathcal{E}_{b}(t)$ is bounded, which implies that there exist $K>0$ and $t'\ge t_0$  such that
$$0 \leq g(x(t))-\min g\le \frac{K}{t^2} \mbox{ for every }t\ge t'.$$
In addition, the function $t \mapsto \|x(t)-x^*\|^2$ is bounded, hence the trajectory $x$ is bounded. Since $t \mapsto \|b(x(t)-x^*)+t(\dot{x}(t)+\b\n g(x(t)))\|^2$ is also bounded, the inequality
$$\|t(\dot{x}(t)+\b\n g(x(t)))\|^2 \le 2\|b(x(t)-x^*)+t(\dot{x}(t)+\b\n g(x(t)))\|^2+2b^2\|x(t)-x^*\|^2,$$
which is true for every $t \geq t_0$, leads to
$$\|\dot{x}(t)+\b\n g(x(t))\| = \mathcal{O}\left(\frac{1}{t}\right).$$

By integrating relation \eqref{deriv2}, in case {\rm (a)}, and relation \eqref{deriv2b}, in case {\rm (b)}, on an interval $[t_2, s]$, where $s \ge t_3$ is arbitrarily chosen, and by letting afterwards $s$ converge to $+\infty$, we obtain
$$t\left(g(x(t))-\min g \right),\,t\|\dot{x}(t)\|^2,\,t\e(t)\|x(t)-x^*\|^2, t^2\|\n g(x(t))\|^2 \in L^1([t_0,+\infty),\R).$$
The boundedness of the trajectory and the condition on the Tikhonov parametrization guarantee that
$$t\e(t)\|x(t)\|^2\in L^1([t_0,+\infty),\R).$$

{\bf The case $\alpha = 3$ and $b=2$.} As we already noticed in the proof of Theorem \ref{conv}, according to \eqref{deri00}, in case {\rm (a)}, and to \eqref{deri00b}, in case {\rm (b)}, we  obtain
$$\frac{d}{dt}\left(\frac{t}{t-\b}\mathcal{E}_{2}(t)\right)\le l\frac{t^2}{t-\b}\e(t)\|x^*\|^2 \ \mbox{for every} \ t \geq t_3,$$
where $l=1\mbox{ and } t_3=\max\left(t_1,\frac{\b(\a-1)}{b-2}\right)$, in case {\rm (a)}, and $l=\frac{2+a\b}{2}\mbox{ and }t_3=\max(t_1,4\b)$ in case {\rm (b)}.

Since $t\e(t)\in L^1([t_0,+\infty),\R)$ and $\e(t)$ is nonnegative, obviously $\frac{t^2}{t-\b}\e(t)\|x^*\|^2\in L^1([t_2,+\infty),\R)$. Using that $t \mapsto \frac{t}{t-\b}\mathcal{E}_{2}(t)$ is bounded from below, from  Lemma \ref{fejer-cont1} it follows that the limit $\lim_{t\To+\infty}\frac{t}{t-\b}\mathcal{E}_{2}(t)$ exists. Consequently, the limit $\lim_{t\To+\infty}\mathcal{E}_{2}(t)$ also exists and $t \mapsto \mathcal{E}_{2}(t)$ is bounded. This implies that there exist $K>0$ and $t'\ge t_0$  such that
$$0 \leq g(x(t))-\min g\le \frac{K}{t^2} \mbox{ for every }t\ge t'.$$
\end{proof}

The next result shows that the statements of Theorem \ref{rates} can be strengthened in case $\alpha >3$. 
\begin{theorem}\label{limits}
Let $x$ be the unique global $C^2$-solution of \eqref{dysy}. Assume that
$$\int_{t_0}^{+\infty}t\e(t)dt<+\infty$$
and that one of the following conditions is fulfilled:
\begin{itemize}
\item[{\rm (a)}] there exist $a>1$ and $t_1\ge t_0$ such that
$$\dot{\e}(t)\le-\frac{a\b}{2}\e^2(t)\mbox{ for every }t\ge t_1;$$
\item[{\rm (b)}] there exist $a>0$ and $t_1\ge t_0$ such that
$$\e(t)\le\frac{a}{t}\mbox{ for every }t\ge t_1.$$
\end{itemize}
Let be an arbitrary $x^* \in \argmin g$. If $\a> 3$, then
 $$t\<\n g(x(t)),x(t)-x^*\>\in L^1([t_0,+\infty),\R)$$
and the limits
 $$\lim_{t\To+\infty}\|x(t)-x^*\|\in\R \ \mbox{and} \ \lim_{t\To+\infty}t\<\dot{x}(t)+\b\n g(x(t)),x(t)-x^*\>\in\R$$
exist. In addition,
$$g(x(t))-\min g=o\left(\frac{1}{t^2}\right),\,\|\dot{x}(t)+\b\n g(x(t))\|=o\left(\frac{1}{t}\right) \ \mbox{and} \ \lim_{t\To+\infty}t^2\e(t)\|x(t)\|^2=0.$$
\end{theorem}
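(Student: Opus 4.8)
The plan is to leverage the estimates already obtained in the proof of Theorem \ref{rates} for the case $\alpha>3$ and $2<b<\alpha-1$, and to squeeze out the stronger conclusions from the $L^1$ information established there. Recall that in that setting we have, for every $t\ge t_2$,
\begin{align*}
\dot{\mathcal{E}_{b}}(t) \le \ & ((2-b)t-\b(2-\a))\left(g(x(t))-\min g\right)+\left(t^2\tfrac{\dot{\e}(t)}{2}+(2-b)t\tfrac{\e(t)}{2}+(\cdots)\right)\|x(t)\|^2\\
& -bt\tfrac{\e(t)}{2}\|x(t)-x^*\|^2-(\a-1-b)t\|\dot{x}(t)\|^2-(\cdots)t^2\|\n g(x(t))\|^2+lt\tfrac{\e(t)}{2}\|x^*\|^2,
\end{align*}
and that $\lim_{t\to+\infty}\mathcal{E}_{b}(t)$ exists, the trajectory is bounded, and the five quantities listed in Theorem \ref{rates} lie in $L^1([t_0,+\infty),\R)$. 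First I would exploit the freedom in the parameter $b$: the conclusions of Theorem \ref{rates} hold for every $b\in(2,\alpha-1)$, so I would pick two distinct values $b_1<b_2$ in that open interval. Subtracting the corresponding energy functionals, $\mathcal{E}_{b_2}(t)-\mathcal{E}_{b_1}(t)$, most terms either cancel or are controlled, and what remains is, up to bounded/$L^1$-integrable terms, a multiple of $(b_2-b_1)(g(x(t))-\min g)$ together with cross terms of the form $t\<\dot x(t)+\b\n g(x(t)),x(t)-x^*\>$ and $\|x(t)-x^*\|^2$. Since both $\lim_{t}\mathcal{E}_{b_1}(t)$ and $\lim_{t}\mathcal{E}_{b_2}(t)$ exist, combining this with the already known existence of $\lim_t\|b_i(x(t)-x^*)+t(\dot x(t)+\b\n g(x(t)))\|^2$ (which follows from boundedness of $\mathcal{E}_{b_i}$ plus the $L^1$ facts, via an Opial/Lyapunov argument as in \cite{att-c-p-r-math-pr2018}) should let me isolate $\lim_t\|x(t)-x^*\|$ and $\lim_t t\<\dot x(t)+\b\n g(x(t)),x(t)-x^*\>$, and to deduce that $t(g(x(t))-\min g)$ has a limit; since that product is also in $L^1$, the limit must be $0$, giving $g(x(t))-\min g=o(1/t^2)$.

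Next, for the $o(1/t)$ rate on $\|\dot x(t)+\b\n g(x(t))\|$, I would use the identity that defines $y(t)$ in \eqref{dynzx}, namely $\dot x(t)+\b\n g(x(t))=y(t)$, together with the fact (to be extracted from the refined analysis, or from rerunning the argument with the anchor $\tfrac{t}{t-\b}$-type multiplier) that $t\|y(t)\|^2\in L^1$ and $t\<y(t),x(t)-x^*\>$ has a limit. Writing $h(t):=\tfrac12\|x(t)-x^*\|^2$, one has $\dot h(t)=\<\dot x(t),x(t)-x^*\>=\<y(t)-\b\n g(x(t)),x(t)-x^*\>$, and since $\<\n g(x(t)),x(t)-x^*\>\ge0$ by convexity, the quantity $t\<\dot x(t)+\b\n g(x(t)),x(t)-x^*\>=t\<y(t),x(t)-x^*\>$ together with $t\<\n g(x(t)),x(t)-x^*\>\in L^1$ (this last membership being exactly the first claimed conclusion, which I would prove by integrating the differential inequality for $\mathcal{E}_b$ once more, now that the $(2-b)t$-coefficient on $g(x(t))-\min g$ can be played against the strong-convexity term) controls $t\dot h(t)$; a standard lemma (the continuous counterpart of the three-sequences lemma, as in \cite[Lemma A.3]{att-c-p-r-math-pr2018}) then yields that $t\|\dot x(t)+\b\n g(x(t))\|^2\to$ some limit, which must be $0$ because $t\|y(t)\|^2\in L^1$. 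Finally, $\lim_t t^2\e(t)\|x(t)\|^2=0$ follows by the same reasoning applied to the term $\tfrac{t^2\e(t)}{2}\|x(t)\|^2$ inside $\mathcal{E}_b(t)$: its derivative is, modulo the $L^1$-terms already accounted for, controlled, so $t^2\e(t)\|x(t)\|^2$ has a limit; since $t\e(t)\|x(t)\|^2\in L^1$ by Theorem \ref{rates} and $\e$ is nonincreasing with $t\e(t)\in L^1$, that limit is forced to be $0$.

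The main obstacle I anticipate is the bookkeeping in the subtraction step: the energy functionals $\mathcal{E}_{b_1}$ and $\mathcal{E}_{b_2}$ differ not only in the coefficient $b$ appearing linearly but also quadratically in $\|b(x(t)-x^*)+t(\dot x(t)+\b\n g(x(t)))\|^2$ and in the term $\tfrac{b(\alpha-1-b)}{2}\|x(t)-x^*\|^2$, so the difference is a genuine quadratic form in the three scalar quantities $\|x(t)-x^*\|^2$, $t\<\dot x(t)+\b\n g(x(t)),x(t)-x^*\>$, $t^2\|\dot x(t)+\b\n g(x(t))\|^2$; I need to verify that the coefficient matrix is such that, knowing the existence of two independent linear combinations has limits (the two energies) plus one more relation from an Opial-type argument, the individual limits are pinned down — this amounts to checking a $3\times3$ (or after using the $L^1$-data, $2\times2$) determinant is nonzero, which should hold precisely because $b_1\ne b_2$, but the computation is delicate and is where a sign error would be easy to make. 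A cleaner route, which I would actually prefer, is to avoid subtraction altogether and instead run the Opial argument directly: set $\phi(t):=\tfrac12\|x(t)-x^*\|^2$ and show, using \eqref{dysy} and the $L^1$-estimates from Theorem \ref{rates}, that $t^2\ddot\phi(t)+(\text{something}\ge\alpha-1)\,t\dot\phi(t)\le(\text{nonnegative }L^1\text{ stuff})$, so that the known continuous version of the lemma (e.g.\ \cite[Lemma 5.1]{att-c-p-r-math-pr2018} or \cite{att-ch-r-esaim2019}) applies and delivers both $\lim_t\|x(t)-x^*\|\in\R$ and $t\<\n g(x(t)),x(t)-x^*\>\in L^1$ in one shot; the remaining $o$-estimates then follow as described above. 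I expect the condition $\int_{t_0}^{+\infty}t\e(t)\,dt<+\infty$ to be used exactly where it was used in Theorem \ref{rates}, plus once more to kill the limit of $t^2\e(t)\|x(t)\|^2$.
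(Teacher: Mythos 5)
Your overall strategy matches the paper's: reuse the energy functional $\mathcal{E}_b$ and the $L^1$-estimates from Theorem \ref{rates}, compare two parameter values $b_1\neq b_2$, and finish with an ODE-type lemma. Two of your worries/claims, however, point at genuine gaps. First, the subtraction step is not a quadratic form at all: expanding $\tfrac12\|b(x-x^*)+t(\dot x+\b\n g(x))\|^2+\tfrac{b(\a-1-b)}{2}\|x-x^*\|^2$, the $b^2$-terms cancel and $\mathcal{E}_b(t)$ is \emph{affine} in $b$, so $\mathcal{E}_{b_1}(t)-\mathcal{E}_{b_2}(t)=(b_1-b_2)\bigl(-\b t(g(x(t))-\min g)+t\<\dot x(t)+\b\n g(x(t)),x(t)-x^*\>+\tfrac{\a-1}{2}\|x(t)-x^*\|^2\bigr)$; there is no determinant to check. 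But precisely for that reason the subtraction gives you only \emph{one} scalar relation, namely that $k(t):=t\<\dot x(t)+\b\n g(x(t)),x(t)-x^*\>+\tfrac{\a-1}{2}\|x(t)-x^*\|^2$ has a limit (using $t(g(x(t))-\min g)\to0$ from Theorem \ref{rates}). You cannot "isolate" the two individual limits from this, and your assertion that $\lim_t\|b_i(x(t)-x^*)+t(\dot x(t)+\b\n g(x(t)))\|^2$ exists because $\mathcal{E}_{b_i}$ is bounded is unjustified (boundedness gives no convergence, and the other summands of $\mathcal{E}_{b_i}$ have no individual limits at this stage). The missing ingredient is the auxiliary function $q(t)=\tfrac12\|x(t)-x^*\|^2+\b\int_{t_0}^t\<\n g(x(s)),x(s)-x^*\>ds$, which satisfies $(\a-1)q(t)+t\dot q(t)=k(t)+\b(\a-1)\int_{t_0}^t\<\n g(x(s)),x(s)-x^*\>ds$; once $t\<\n g(x(t)),x(t)-x^*\>\in L^1$ is established (by integrating the refined derivative inequality, as you correctly propose), Lemma \ref{lemmader} applied to $q$ yields $\lim_t\|x(t)-x^*\|$, and then $\lim_t t\<\dot x(t)+\b\n g(x(t)),x(t)-x^*\>$ follows.

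Second, and more seriously, your derivation of the $o$-rates is off by one power of $t$. From "$t(g(x(t))-\min g)$ has a limit and lies in $L^1$, hence the limit is $0$" you may conclude only $g(x(t))-\min g=o(1/t)$, not $o(1/t^2)$; likewise $t\|\dot x(t)+\b\n g(x(t))\|^2\to0$ gives only $\|\dot x(t)+\b\n g(x(t))\|=o(1/\sqrt t)$, not $o(1/t)$. The correct closing argument applies your (sound) "has a limit, divided by $t$ is in $L^1$, but $1/t\notin L^1$" reasoning to the $t^2$-weighted block: rewrite $\mathcal{E}_b(t)$ as in \eqref{energy1} and peel off the terms whose limits are now known, so that $\varphi(t)=(t^2-\b(b+2-\a)t)(g(x(t))-\min g)+\tfrac{t^2\e(t)}{2}\|x(t)\|^2+\tfrac{t^2}{2}\|\dot x(t)+\b\n g(x(t))\|^2$ has a limit; since $\varphi(t)/t\in L^1([t_0,+\infty),\R)$ by Theorem \ref{rates} and $1/t\notin L^1$, that limit is $0$, and nonnegativity of each summand for large $t$ delivers all three $o$-estimates at once. (You apply exactly this correct pattern to $t^2\e(t)\|x(t)\|^2$, but not to the other two quantities.) Your alternative "cleaner route" via a differential inequality for $\phi(t)=\tfrac12\|x(t)-x^*\|^2$ is plausible in spirit, but the Hessian term produces $\b\,\tfrac{d}{dt}\bigl(\<\n g(x(t)),x(t)-x^*\>-g(x(t))\bigr)$, which after multiplication by $t$ and integration by parts reintroduces $\b\int\<\n g(x(s)),x(s)-x^*\>ds$ on the wrong side; absorbing it is exactly what the function $q$ above is designed for, so this route is not actually simpler.
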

\begin{proof}
Since $\a >3$ we can choose $2<b< \a-1$.  From \eqref{temp1} and \eqref{temp2} we have that
\begin{align}\label{derivlim}
\dot{\mathcal{E}_{b}}(t) = & \ (2t-\b(b+2-\a))\left(g(x(t))-\min g\right)+\left(t^2\frac{\dot{\e}(t)}{2}+t\e(t)\right)\|x(t)\|^2\nonumber\\
& \ +(b+1-\a)t\|\dot{x}(t)\|^2+(\b^2 t-\b t^2)\|\n g(x(t))\|^2-\b\e(t)t^2\<\n g(x(t)),x(t)\>\nonumber\\
& \ -bt\left\<\left(1-\frac{\b}{t}\right)\n g(x(t))+\e(t)x(t), x(t)-x^*\right\>\mbox{ for every }t\ge t_0.
\end{align}
We will address the settings provided by the conditions  {\rm (a)} and {\rm (b)} separately.

{\it Condition {\rm (a)} holds:} In this case we estimate $-\b\e(t)t^2\<\n g(x(t)),x(t)\>$ just as in \eqref{temp5} and from \eqref{derivlim} we obtain 
\begin{align}\label{derivlim1}
\dot{\mathcal{E}_{b}}(t) \le& \ (2t-\b(b+2-\a))\left(g(x(t))-\min g\right)+\left(t^2\frac{\dot{\e}(t)}{2}+t\e(t)+\frac{a\b\e^2(t)t^2}{4}\right)\|x(t)\|^2 \nonumber\\
& \ +(b+1-\a)t\|\dot{x}(t)\|^2+\left(\b^2 t-\b\left(1-\frac{1}{a}\right) t^2\right)\|\n g(x(t))\|^2\nonumber\\
& \ -bt\left\<\left(1-\frac{\b}{t}\right)\n g(x(t))+\e(t)x(t), x(t)-x^*\right\> \mbox{ for every }t\ge t_0.
\end{align}

We define $t_2:=\max\left(\b,t_1,\frac{\b a}{a-1}\right).$ By using condition {\rm (a)}, neglecting the nonpositive terms and afterwards integrating on the interval $[t_2,t]$, with arbitrary $t \geq t_2$, we obtain
\begin{align}\label{temp6}
\int_{t_2}^t bs\left\<\left(1-\frac{\b}{s}\right)\n g(x(s)), x(s)-x^*\right\>\le & \ \mathcal{E}_{b}(t_2)-\mathcal{E}_{b}(t)+\int_{t_2}^t (2s-\b(b+2-\a))\left(g(x(s))-\min g\right)ds \nonumber\\
&-\int_{t_2}^t bs\left(1-\frac{\b}{s}\right)\left\<\e(s)x(s), x(s)-x^*\right\>+\int_{t_2}^t s\e(s)\|x(s)\|^2ds.
\end{align}
For every $s\ge t_2$, by the monotonicity of $\n g$, we have 
$\left\<\n g(x(s)), x(s)-x^*\right\>\ge 0$. Further, it holds $$bs\left(1-\frac{\b}{s}\right)\e(s)\left |\left\<x(s), x(s)-x^*\right\> \right |\le \left(1-\frac{\b}{s}\right)\frac{bs\e(s)}{2}(\|x(s)\|^2+\|x(s)-x^*\|^2).$$
By letting in \eqref{temp6} $s$ converge to $+\infty$ and by taking into account that, according to Theorem \ref{rates}, 
$$t\e(t)\|x(t)\|^2,\,t\e(t)\|x(t)-x^*\|^2,\,(2t-\b(b+2-\a))\left(g(x(t))-g^*\right)\in L^1([t_0,+\infty),\R)$$ and also that $t \mapsto \mathcal{E}_{b}(t)$ is bounded, we obtain
\begin{equation}\label{temp7}
t\<\n g(x(t)),x(t)-x^*\>\in L^1([t_0,+\infty),\R).
\end{equation}

{\it Condition {\rm (b)} holds:} In this case we estimate $-\b\e(t)t^2\<\n g(x(t)),x(t)\>$ just as in \eqref{temp5b} and from \eqref{derivlim} we obtain
\begin{align}\label{derivlim1b}
\dot{\mathcal{E}_{b}}(t) \le & \ (2t-\b(b+2-\a))\left(g(x(t))-\min g\right)+\left(t^2\frac{\dot{\e}(t)}{2}+t\e(t)\right)\|x(t)\|^2\nonumber\\
& \ +(b+1-\a)t\|\dot{x}(t)\|^2+\left(\b^2 t-\b t^2+\frac{\b \e(t)t^3}{2a}\right)\|\n g(x(t))\|^2+\frac{a_1\b \e(t)t}{2}\|x^*\|^2\nonumber\\
&\ -bt\left\<\left(1-\frac{\b}{t}\right)\n g(x(t))+\e(t)x(t), x(t)-x^*\right\> \mbox{ for every }t\ge t_0.
\end{align}
We define $t_2:=\max\left(4\b,t_1\right).$ According to \eqref{usi} we have that $\b^2 t-\b t^2+\frac{\b \e(t)t^3}{2a_1}\le 0$ for every $t\ge t_2$. By using condition {\rm (b)}, neglecting the nonpositive terms and afterwards integrating on the interval $[t_2,t]$, with arbitrary $t \geq t_2$, we obtain
\begin{align}\label{temp6b}
\int_{t_2}^t bs\left\<\left(1-\frac{\b}{s}\right)\n g(x(s)), x(s)-x^*\right\>\le & \ \mathcal{E}_{b}(t_2)-\mathcal{E}_{b}(t)+\int_{t_2}^t (2s-\b(b+2-\a))\left(g(x(s))-\min g\right)ds \nonumber \\
& \ -\int_{t_2}^t bs\left(1-\frac{\b}{s}\right)\left\<\e(s)x(s), x(s)-x^*\right\>+\int_{t_2}^t s\e(s)\|x(s)\|^2ds \nonumber\\ 
& \ +\frac{a\b}{2}\|x^*\|^2\int_{t_2}^t s\e(s)ds.
\end{align}
From here, by using the similar arguments as for the case {\rm (a)}, we obtain \eqref{temp7}.

Consider now, $b_1,b_2\in(2, \a-1),\,b_1\neq b_2.$ Then for every $t \geq t_0$ we have
$$\mathcal{E}_{b_1}(t)-\mathcal{E}_{b_2}(t)=(b_1-b_2)\left(-\b t(g(x(t))-\min g)+t\<\dot{x}(t)+\b\n g(x(t)),x(t)-x^*\>+\frac{\a-1}{2}\|x(t)-x^*\|^2\right).$$
According to Theorem \ref{rates}, the limits
$$\lim_{t\To+\infty}(\mathcal{E}_{b_1}(t)-\mathcal{E}_{b_2}(t)) \in \R \ \mbox{and} \ \lim_{t\To+\infty}t(g(x(t))-g^*) \in \R$$
exist, consequently, the limit
$$\lim_{t\To+\infty}\left(t\<\dot{x}(t)+\b\n g(x(t)),x(t)-x^*\>+\frac{\a-1}{2}\|x(t)-x^*\|^2\right)$$
also exists.
For every $t \geq t_0$ we define 
$$k(t)=t\<\dot{x}(t)+\b\n g(x(t)),x(t)-x^*\>+\frac{\a-1}{2}\|x(t)-x^*\|^2$$ 
and
$$q(t)=\frac12\|x(t)-x^*\|^2+\b\int_{t_0}^t\<\n g(x(s)),x(s)-x^*\>ds.$$
Then
$$(\a-1)q(t)+t\dot{q}(t)=k(t)+\b(\a-1)\int_{t_0}^t\<\n g(x(s)),x(s)-x^*\>ds \ \mbox{for every} \ t \geq t_0.$$
From \eqref{temp7} and the fact that $k(t)$ has a limit whenever $t\To+\infty$, we obtain that
$(\a-1)q(t)+t\dot{q}(t)$ has a limit when $t\To+\infty$. According to Lemma \ref{lemmader}, $q(t)$ has a limit when $t\To+\infty.$ By using \eqref{temp7} again we obtain that the limit
$$\lim_{t\To+\infty}\|x(t)-x^*\| \in \R$$
exists and, consequently, the limit
$$\lim_{t\To+\infty}t\<\dot{x}(t)+\b\n g(x(t)),x(t)-x^*\> \in \R$$
also exists. On the other hand, we notice that for every $t \geq t_0$ the energy functional can be written as
\begin{align}\label{energy1}
\mathcal{E}_{b}(t)=& \ (t^2-\b(b+2-\a)t)\left(g(x(t))-\min g\right)+\frac{t^2\e(t)}{2}\|x(t)\|^2 \nonumber\\
& \ +\frac{t^2}{2}\|\dot{x}(t)+\b\n g(x(t))\|^2+bt\<\dot{x}(t)+\b\n g(x(t)),x(t)-x^*\>+\frac{b(\a-1)}{2}\|x(t)-x^*\|^2.
\end{align}
Since the limits
$$\lim_{t\To+\infty}\mathcal{E}_{b}(t) \in \R \ \mbox{and} \ \lim_{t\To+\infty}\left(bt\<\dot{x}(t)+\b\n g(x(t)),x(t)-x^*\>+\frac{b(\a-1)}{2}\|x(t)-x^*\|^2\right) \in \R$$
exist, it follows that the limit
$$\lim_{t\To+\infty}\left((t^2-\b(b+2-\a)t)\left(g(x(t))-\min g\right)+\frac{t^2\e(t)}{2}\|x(t)\|^2+\frac{t^2}{2}\|\dot{x}(t)+\b\n g(x(t))\|^2\right) \in \R$$
exists, too.

We define
 $$\varphi:[t_0,+\infty)\To\R, \ \varphi(t)=(t^2-\b(b+2-\a)t)\left(g(x(t))-g^*\right)+\frac{t^2\e(t)}{2}\|x(t)\|^2+\frac{t^2}{2}\|\dot{x}(t)+\b\n g(x(t))\|^2,$$
and notice that for sufficiently large $t$ it holds
 $$0\le\frac{\varphi(t)}{t}\le 2t\left(g(x(t))-\min g\right)+\frac{t\e(t)}{2}\|x(t)\|^2+\frac{t}{2}\|\dot{x}(t)+\b\n g(x(t))\|^2.$$
According to Theorem \ref{rates} the right hand side of the above inequality is of class $L^1([t_0,+\infty),\R).$

Hence, $$\frac{\varphi(t)}{t}\in L^1([t_0,+\infty),\R).$$
Since $\frac{1}{t}\not\in L^1([t_0,+\infty),\R)$ and the limit $\lim_{t \To +\infty} \varphi(t) \in \R$ exists,
it must hold that $\lim_{t\To+\infty}\varphi(t)=0.$ Consequently,
$$\lim_{t\To+\infty}(t^2-\b(b+2-\a)t)\left(g(x(t))-\min g\right)=\lim_{t\To+\infty}\frac{t^2\e(t)}{2}\|x(t)\|^2=\lim_{t\To+\infty}\frac{t^2}{2}\|\dot{x}(t)+\b\n g(x(t))\|^2=0$$
and the proof is complete.
\end{proof}

Working in the hypotheses of Theorem \ref{limits} we can prove also the weak convergence of the trajectories generated by \eqref{dysy} to a minimizer of the objective function $g$. 

\begin{theorem}\label{convergencetraj}
Let $x$ be the unique global $C^2$-solution of \eqref{dysy}. Assume that
$$\int_{t_0}^{+\infty}t\e(t)dt<+\infty$$
and that one of the following conditions is fulfilled:
\begin{itemize}
\item[{\rm (a)}] there exist $a>1$ and $t_1\ge t_0$ such that
$$\dot{\e}(t)\le-\frac{a\b}{2}\e^2(t)\mbox{ for every }t\ge t_1;$$
\item[{\rm (b)}] there exist $a>0$ and $t_1\ge t_0$ such that
$$\e(t)\le\frac{a}{t}\mbox{ for every }t\ge t_1.$$
\end{itemize}
If $\a> 3$, then $x(t)$ converges weakly to an element in $\argmin g$ as $t \To +\infty$.
\end{theorem}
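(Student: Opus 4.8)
The plan is to invoke the continuous version of Opial's lemma for the set $S=\argmin g$. Recall that this tool requires two ingredients: (i) for every $z\in\argmin g$ the limit $\lim_{t\To+\infty}\|x(t)-z\|$ exists in $\R$; and (ii) every weak sequential cluster point of the trajectory $x(t)$ as $t\To+\infty$ lies in $\argmin g$. Once both are checked, Opial's lemma delivers the weak convergence of $x(t)$ to some element of $\argmin g$.

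Ingredient (i) is precisely one of the conclusions of Theorem \ref{limits}: we are working under its hypotheses and $\a>3$, so for every arbitrarily fixed $x^*\in\argmin g$ the limit $\lim_{t\To+\infty}\|x(t)-x^*\|\in\R$ exists. In particular $x$ is bounded on $[t_0,+\infty)$, so it does possess weak sequential cluster points as $t\To+\infty$, which is what makes condition (ii) meaningful.

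For ingredient (ii), let $(t_n)_{n\in\N}$ be a sequence with $t_n\To+\infty$ and $x(t_n)\rightharpoonup\bar x$ for some $\bar x\in\mathcal H$. By Theorem \ref{rates} (and sharpened in Theorem \ref{limits}) we have $g(x(t))-\min g=\mathcal O(1/t^2)$, hence $g(x(t_n))\To\min g$ as $n\To+\infty$. Since $g$ is convex and continuous, it is weakly lower semicontinuous, so $g(\bar x)\le\liminf_{n\To+\infty}g(x(t_n))=\min g$, which forces $\bar x\in\argmin g$. This verifies the second Opial condition, and the conclusion follows.

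I do not anticipate any real obstacle: the entire analytic burden has already been carried by Theorems \ref{rates} and \ref{limits}. The only points that need a moment's attention are that the boundedness of $x$ — needed so that weak cluster points exist — is part of what was established earlier, and that it is the weak lower semicontinuity of $g$ that transfers the vanishing of $g(x(t))-\min g$ to any cluster point.
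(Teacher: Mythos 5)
Your proposal is correct and follows exactly the paper's own argument: apply the continuous Opial lemma with $S=\argmin g$, obtaining condition (i) from the existence of $\lim_{t\To+\infty}\|x(t)-x^*\|$ established in Theorem \ref{limits}, and condition (ii) from $\lim_{t\To+\infty}g(x(t))=\min g$ together with the weak lower semicontinuity of $g$. No gaps.
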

\begin{proof} We will to apply the continuous version of the Opial Lemma (Lemma \ref{Opial}) for $S=\argmin g.$ According to Theorem \ref{limits}, the limit
 $$\lim_{t\To+\infty}\|x(t)-x^*\| \in \R$$ 
exists for every $x^*\in\argmin g$ .

Further, let $\ol x\in \mathcal{H}$ be a weak sequential limit point of  $x(t)$. This means that there exists a sequence $(t_n)_{n \in \N} \subseteq [t_0, +\infty)$ such that $\lim_{n \To \infty} t_n = +\infty$ and $x(t_n)$ converges weakly to $\ol x$ as $n \To \infty$. Since $g$ is weakly lower semicontinuous,  we have that
$$g(\ol x)\le\liminf_{n\To+\infty}g(x(t_n)).$$
On the other hand, according to Theorem \ref{rates},
 $$\lim_{t\To +\infty}g(x(t))=\min g,$$
consequently one has $g(\ol x)\le \min g$, which shows that $\ol x\in\argmin g.$

The convergence of the trajectory is a consequence of Lemma \ref{Opial}.
\end{proof}

\begin{remark} \label{rem1}
We proved in this section that the 
convergence rate of $o\left(\frac{1}{t^2}\right)$ for $g(x(t))$, the converge rate of $o\left(\frac{1}{t}\right)$ for $\|\dot{x}(t)+\b\n g(x(t))\|$ and the weak convergence of the trajectory to a minimizer of $g$ that have been obtained in \cite{att-p-r-jde2016} for the dynamical system with Hessian driven damping \eqref{dysy-hess} are preserved when this system is enhanced with a Tikhonov regularization term. In addition, in the case when the Hessian driven damping term is removed, which is the case when $\beta =0$, we recover the results provided in \cite{ACR} for the dynamical system \eqref{dysy-sbc-tikh} with Tikhonov regularization term. In this setting, we have to assume in Theorem \ref{conv} just that $\int_{t_0}^{+\infty}\frac{\e(t)}{t}dt<+\infty$, and in the theorems \ref{rates} -  \ref{convergencetraj} just that $\int_{t_0}^{+\infty}t\e(t)dt<+\infty$, since condition {\rm (a)} is automatically fulfilled.

\end{remark}

\section{Strong convergence to the minimum norm solution}\label{sec4}

In this section we will continue the investigations we did at the end of Section \ref{sec3}, by working in the same setting, on the behaviour of the trajectory of the dynamical system \eqref{dysy} by concentrating on strong convergence. In particular, we will provide conditions on the Tikhonov parametrization $t \mapsto \e(t)$ which will guarantee that the trajectory converges to a minimum norm solution of $g$, which is the element of minimum norm of the nonempty convex closed set $\argmin g$.
We start with the following result. 

\begin{lemma}\label{ergl}
Let $x$ be the unique global $C^2$-solution of \eqref{dysy}. For $x^*\in \argmin g$ we introduce the function
$$h_{x^*}:[t_0,+\infty)\To\R \ h_{x^*}(t)=\frac12\|x(t)-x^*\|^2.$$
If $\alpha > 0$ and $\beta \geq 0$, then
$$\sup_{t\ge t_0}\|\dot{x}(t)\|<+\infty \ \mbox{and} \ \frac{1}{t}\|\dot{x}(t)\|^2\in L^1([t_0,+\infty),\R).$$
In addition,
$$\sup_{t\ge t_0}\frac{1}{t}|\dot{h}_{x^*}(t)|<+\infty.$$
\end{lemma}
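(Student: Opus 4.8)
The plan is to extract the two global estimates from the same energy functional that was used to establish existence, namely $\mathcal{E}(t)=\frac12\|\dot x(t)\|^2+g(x(t))+\frac12\e(t)\|x(t)\|^2$, and then to bootstrap the third (the bound on $\frac1t|\dot h_{x^*}(t)|$) from the first using the Cauchy--Schwarz inequality together with the $\frac1t$-growth of the trajectory that boundedness of $\dot x$ forces. So the first step is to recall from the proof of Theorem \ref{rewrite} that $\frac{d}{dt}\mathcal{E}(t)=-\frac{\a}{t}\|\dot x(t)\|^2-\b\<\n^2 g(x(t))\dot x(t),\dot x(t)\>+\frac12\dot\e(t)\|x(t)\|^2\le 0$, since $\e$ is nonincreasing and $\n^2 g(x(t))\succeq 0$. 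Hence $\mathcal{E}$ is nonincreasing on $[t_0,+\infty)$, and since $g$ is bounded below and $\e\ge 0$, the term $\frac12\|\dot x(t)\|^2$ stays bounded by $\mathcal{E}(t_0)-\inf g$; this gives $\sup_{t\ge t_0}\|\dot x(t)\|<+\infty$.

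For the integrability claim I would integrate the energy inequality: from $\frac{d}{dt}\mathcal{E}(t)\le -\frac{\a}{t}\|\dot x(t)\|^2$ (discarding the two remaining nonpositive terms) and integrating on $[t_0,T]$, one gets
$$\a\int_{t_0}^T\frac1t\|\dot x(t)\|^2\,dt\le \mathcal{E}(t_0)-\mathcal{E}(T)\le \mathcal{E}(t_0)-\inf g<+\infty,$$
uniformly in $T$, so letting $T\To+\infty$ yields $\frac1t\|\dot x(t)\|^2\in L^1([t_0,+\infty),\R)$; here $\a>0$ is exactly what is needed to divide out the constant.

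Finally, for the bound on $\frac1t|\dot h_{x^*}(t)|$, note $\dot h_{x^*}(t)=\<\dot x(t),x(t)-x^*\>$, so by Cauchy--Schwarz $|\dot h_{x^*}(t)|\le\|\dot x(t)\|\,\|x(t)-x^*\|$. Since $\|\dot x\|_\infty:=\sup_{t\ge t_0}\|\dot x(t)\|<+\infty$ by the first part, integrating $\|\dot x\|$ gives $\|x(t)-x^*\|\le\|x(t_0)-x^*\|+\|\dot x\|_\infty(t-t_0)\le C t$ for a suitable constant $C$ and all $t\ge t_0$. Therefore $\frac1t|\dot h_{x^*}(t)|\le\frac1t\|\dot x\|_\infty\cdot Ct=C\|\dot x\|_\infty<+\infty$, uniformly in $t$, which is the asserted bound. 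The only mild subtlety — not really an obstacle — is keeping track that $\mathcal{E}$ being nonincreasing is legitimate as a global statement, which is already guaranteed because Theorem \ref{rewrite} provides a global $C^2$-solution, so all the manipulations above are valid on the whole half-line $[t_0,+\infty)$ rather than merely on a maximal interval.
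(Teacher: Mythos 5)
Your proposal is correct and follows essentially the same route as the paper: the paper's proof uses exactly the energy functional $W(t)=g(x(t))+\frac12\|\dot{x}(t)\|^2+\frac{\e(t)}{2}\|x(t)\|^2$, derives the same inequality $\dot W(t)\le-\frac{\a}{t}\|\dot{x}(t)\|^2$, integrates it for the $L^1$ claim, and obtains the bound on $\frac1t|\dot h_{x^*}(t)|$ via Cauchy--Schwarz and the linear growth $\|x(t)-x^*\|\le\|x(t_0)-x^*\|+\sup_{s\ge t_0}\|\dot x(s)\|\,(t-t_0)$. No gaps.
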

\begin{proof}
We consider the following energy functional
\begin{equation}\label{energy}
W: [t_0,+\infty) \rightarrow \R, \ W(t)=g(x(t))+\frac12\|\dot{x}(t)\|^2+\frac{\e(t)}{2}\|x(t)\|^2.
\end{equation}
By using \eqref{dysy} we have for every $t \geq t_0$
\begin{align*}
\dot{W}(t)=& \  \<\n g(x(t),\dot{x}(t)\>+\<\ddot{x}(t),\dot{x}(t)\>+\frac{\dot \e(t)}{2}\|x(t)\|^2+\e(t)\<\dot{x}(t),x(t)\>\\
=& \ \<\n g(x(t),\dot{x}(t)\>+\frac{\dot \e(t)}{2}\|x(t)\|^2+\e(t)\<\dot{x}(t),x(t)\>\\
&\ +\left\<-\frac{\a}{t}\dot{x}(t)-\b\n^2g(x(t))\dot{x}(t)-\n g(x(t))-\e(t)x(t),\dot{x}(t)\right\>\\
= & \ -\frac{\a}{t}\|\dot{x}(t)\|^2+\frac{\dot \e(t)}{2}\|x(t)\|^2-\b\<\n^2 g(x(t))\dot{x}(t),\dot{x}(t)\>.
\end{align*}
From here, invoking the convexity of $g$, it follows
\begin{equation}\label{descenerg}
\dot{W}(t)\le -\frac{\a}{t}\|\dot{x}(t)\|^2+\frac{\dot\e(t)}{2}\|x(t)\|^2,
\end{equation}
for every $t \geq t_0$. Since  $\e$ is nonincreasing this leads further to
\begin{equation}\label{forderiv}
 \dot{W}(t)\le  -\frac{\a}{t}\|\dot{x}(t)\|^2\mbox{ for every }t\ge t_0,
 \end{equation}
therefore the energy $W$ is nonincreasing. Since $W$ is bounded from bellow, there exists
$\lim_{t\To+\infty}W(t)\in\R.$ Consequently, $t \mapsto W(t)$ is bounded on $[t_0,+\infty)$ from which, since $g$ is bounded from bellow, we obtain that
$$\sup_{t\ge t_0}\|\dot{x}(t)\|=K<+\infty.$$
By integrating \eqref{forderiv} on an interval $[t_0,t]$ for arbitrary $t > t_0$ it yields
$$\int_{t_0}^t \frac{\a}{s}\|\dot{x}(s)\|^2 ds\le W(t_0)-W(t),$$
which, by letting $t\To+\infty$, leads to
$$\frac{1}{t}\|\dot{x}(t)\|^2\in L^1([t_0,+\infty),\R).$$
Further, for every $t \geq t_0$ we have that
$$|\dot{h}_{x^*}(t)|=|\<\dot{x}(t),x(t)-x^*\>|\le\|\dot{x}(t)\|\|x(t)-x^*\|$$
and
$$\||x(t)-x^*\|\le \|x(t)-x(t_0)\|+\|x(t_0)-x^*\|\le \sup_{t\ge t_0}\|\dot{x}(t)\|(t-t_0)+\|x(t_0)-x^*\|,$$
hence, 
\begin{align*}
\frac{1}{t}|\dot{h}_{x^*}(t)|\le & \sup_{t\ge t_0}\|\dot{x}(t)\| \left(\sup_{t\ge t_0}\|\dot{x}(t)\|\left(1-\frac{t_0}{t}\right)+\frac{1}{t}\|x(t_0)-x^*\| \right)\\
\le & \sup_{t\ge t_0}\|\dot{x}(t)\| \left(\sup_{t\ge t_0}\|\dot{x}(t)\|+\frac{1}{t_0}\|x(t_0)-x^*\| \right) \in \R.
\end{align*}
\end{proof}

For each $\e > 0,$ we denote by $x_{\e}$ the unique solution of the strongly convex minimization problem
$$x_{\e} = \argmin_{x\in\mathcal{H}}\left(g(x)+\frac{\e}{2}\|x\|^2\right).$$
In virtue of the Fermat rule,  this is equivalent to
$$\n g(x_\e)+\e x_\e=0.$$
It is well known that the Tikhonov approximation curve $\e\To x_\e$ satisfies $\lim_{\e\To 0}x_\e=x^*$,
where $x^*= \argmin \{\|x\|: x \in \argmin g\}$ is the element of minimum norm of the nonempty convex closed set $\argmin g$. Since $\n g$ is monotone, for every $\e >0$ it holds $\<\n g(x_\e)-\n g(x^*), x_\e-x^*\>\ge 0$, that is $\<-\e x_\e,x_\e-x^*\>\ge 0$.
Hence,$-\|x_\e\|^2+\<x_\e,x^*\>\ge 0$, which, by using the Cauchy-Schwarz inequality, implies
$$\|x_\e\|\le\|x^*\| \ \mbox{for every} \ \e >0.$$

\subsection{Strong ergodic convergence}\label{subsec41}

We will start by proving a strong ergodic convergence result for the trajectory of \eqref{dysy}. 

\begin{theorem}\label{ergconvergence}
Let $x$ be the unique global $C^2$-solution of \eqref{dysy}. Assume that
$$\int_{t_0}^{+\infty}\frac{\e(t)}{t}dt=+\infty.$$
Let $x^*= \argmin \{\|x\|: x \in \argmin g\}$ be the element of minimum norm of the nonempty convex closed set $\argmin g$. If $\alpha > 0$, then
$$\lim_{t\To+\infty}\frac{1}{\int_{t_0}^{t}\frac{\e(s)}{s}ds}\int_{t_0}^{t}\frac{\e(s)}{s}\|x(s)-x^*\|^2 ds=0 \ \mbox{and} \ \liminf_{t\To+\infty}\|x(t)-x^*\|=0.$$
\end{theorem}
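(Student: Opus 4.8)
The plan is to work with the anchor function $h_{x^*}(t)=\tfrac12\|x(t)-x^*\|^2$ from Lemma \ref{ergl} and to exploit the strong monotonicity inequality that the Tikhonov term produces. First I would use the defining identity of the regularized minimizer $x_{\e(t)}$, namely $\n g(x_{\e(t)})+\e(t)x_{\e(t)}=0$, together with the monotonicity of $\n g$, to obtain for every $t\ge t_0$
\begin{equation}\label{propmon}
\<\n g(x(t))+\e(t)x(t),x(t)-x_{\e(t)}\>\ge \e(t)\|x(t)-x_{\e(t)}\|^2.
\end{equation}
Then I would compute $\ddot h_{x^*}(t)=\|\dot x(t)\|^2+\<\ddot x(t),x(t)-x^*\>$ and substitute $\ddot x(t)$ from \eqref{dysy}; this replaces $\<\ddot x(t),x(t)-x^*\>$ by $-\tfrac{\a}{t}\<\dot x(t),x(t)-x^*\>-\b\<\n^2g(x(t))\dot x(t),x(t)-x^*\>-\<\n g(x(t))+\e(t)x(t),x(t)-x^*\>$. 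Using $\<\n^2 g(x(t))\dot x(t),x(t)-x^*\>=\tfrac{d}{dt}\<\n g(x(t)),x(t)-x^*\>-\<\n g(x(t)),\dot x(t)\>$ and convexity to control $\<\n g(x(t)),x(t)-x^*\>\ge 0$, and then splitting $x(t)-x^*=(x(t)-x_{\e(t)})+(x_{\e(t)}-x^*)$ to invoke \eqref{propmon}, I expect to arrive at a differential inequality of the schematic form
\begin{equation}\label{diffineq}
\ddot h_{x^*}(t)+\frac{\a}{t}\dot h_{x^*}(t)+\e(t)\|x(t)-x^*\|^2 \le \|\dot x(t)\|^2 + (\text{remainder terms}),
\end{equation}
where the remainder collects $\e(t)\<x_{\e(t)},\cdot\>$-type cross terms bounded via Cauchy–Schwarz and $\|x_{\e(t)}\|\le\|x^*\|$, plus the Hessian boundary term $\b\tfrac{d}{dt}\<\n g(x(t)),x(t)-x^*\>$, which is handled by noting $\<\n g(x(t)),x(t)-x^*\>\ge 0$ so its integral contributes with a favorable sign.

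Next I would multiply \eqref{diffineq} by $t$ and rewrite $t\ddot h_{x^*}+\a\dot h_{x^*}$ as $\tfrac{d}{dt}(t\dot h_{x^*})+(\a-1)\dot h_{x^*}$, so that after integrating over $[t_0,t]$ the left-hand side produces $\int_{t_0}^t s\e(s)\|x(s)-x^*\|^2\,ds$ plus boundary terms in $t\dot h_{x^*}(t)$ and $h_{x^*}(t)$. The key point is that Lemma \ref{ergl} gives $\tfrac1t|\dot h_{x^*}(t)|$ bounded, $\tfrac1t\|\dot x(t)\|^2\in L^1$, and $\sup_t\|\dot x(t)\|<+\infty$; combined with $\|x_{\e(t)}\|\le\|x^*\|$ and $\e$ nonincreasing, every remainder term on the right, after the multiplication by $t$ and integration, is either $O(t)$ (from boundary terms like $t\dot h_{x^*}$), integrable, or of the form $\int_{t_0}^t \e(s)\cdot(\text{bounded})\,ds$. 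Since $h_{x^*}$ itself grows at most quadratically (again from Lemma \ref{ergl}) but is divided appropriately, I want to set this up so that, after dividing through by $\int_{t_0}^t \tfrac{\e(s)}{s}\,ds$, which diverges by hypothesis, all these contributions vanish in the limit. Concretely I would aim at an estimate
\begin{equation}\label{final}
\int_{t_0}^{t}\frac{\e(s)}{s}\|x(s)-x^*\|^2\,ds \le C + o\!\left(\int_{t_0}^{t}\frac{\e(s)}{s}\,ds\right),
\end{equation}
from which the first assertion follows by dividing; and the second assertion, $\liminf_{t\To+\infty}\|x(t)-x^*\|=0$, follows because if $\|x(t)-x^*\|$ were bounded below by some $\delta>0$ for all large $t$, then the left side of \eqref{final} would be at least $\delta^2\int \tfrac{\e(s)}{s}\,ds$ minus a constant, contradicting \eqref{final} since the divergent integral cannot be $o$ of itself.

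The main obstacle I anticipate is bookkeeping the weight: to get a \emph{weighted} average of $\|x(s)-x^*\|^2$ against $\tfrac{\e(s)}{s}$ rather than against $s\e(s)$, I must introduce the right integrating factor before integrating \eqref{diffineq}. Rather than multiplying by $t$, the cleaner route is probably to divide \eqref{diffineq} by $t$, or to multiply by a power $t^{\a-1}$ and work with $\tfrac{d}{dt}(t^{\a-1}\dot h_{x^*})$, then divide back — the precise manipulation that turns the damping structure $\ddot h+\tfrac{\a}{t}\dot h$ into something whose integral naturally carries the weight $\tfrac{\e(s)}{s}$ on the $\e(s)\|x(s)-x^*\|^2$ term. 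A secondary subtlety is that $x_{\e(t)}$ depends on $t$, so any term where I differentiate through $x_{\e(t)}$ must be avoided; I therefore only use the static inequality \eqref{propmon} pointwise in $t$ and never differentiate the approximation curve. Finally, controlling the cross term $\e(t)\<x_{\e(t)}, x(t)-x^*\>$ uniformly requires only $\|x_{\e(t)}\|\le\|x^*\|$, which is already established in the excerpt, so no extra regularity of the Tikhonov curve is needed.
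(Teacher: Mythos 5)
Your overall architecture coincides with the paper's: the anchor function $h_{x^*}(t)=\frac12\|x(t)-x^*\|^2$, substitution of \eqref{dysy} into $\ddot h_{x^*}+\frac{\a}{t}\dot h_{x^*}$, division by $t$ (not multiplication) so that $\ddot h_{x^*}+\frac{\a}{t}\dot h_{x^*}$ becomes $\frac{1}{t^{\a+1}}\frac{d}{dt}\big(t^{\a}\dot h_{x^*}\big)$ and the Tikhonov term carries the weight $\frac{\e(t)}{t}$, the bounds of Lemma \ref{ergl} to control the integrated right-hand side, and finally division by the divergent integral, with a l'Hospital argument absorbing the $\|x^*\|^2-\|x_{\e(t)}\|^2$ correction. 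Your closing argument for $\liminf_{t\To+\infty}\|x(t)-x^*\|=0$ is also the correct one. There is, however, a genuine gap in your very first step, namely in how the term $\e(t)\|x(t)-x^*\|^2$ is produced.

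The identity for $\ddot h_{x^*}$ hands you the inner product $\<\n g(x(t))+\e(t)x(t),x(t)-x^*\>$, paired against $x(t)-x^*$, whereas your monotonicity inequality $\<\n g(x(t))+\e(t)x(t),x(t)-x_{\e(t)}\>\ge\e(t)\|x(t)-x_{\e(t)}\|^2$ is centered at $x_{\e(t)}$. The proposed splitting $x(t)-x^*=(x(t)-x_{\e(t)})+(x_{\e(t)}-x^*)$ therefore leaves the cross term $\<\n g(x(t))+\e(t)x(t),x_{\e(t)}-x^*\>$, which is not of the ``$\e(t)\<x_{\e(t)},\cdot\>$'' type and cannot be bounded by Cauchy--Schwarz and $\|x_{\e(t)}\|\le\|x^*\|$ alone. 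First, $\<\n g(x(t)),x_{\e(t)}-x^*\>$ involves $\n g(x(t))$, which is not known to be bounded under the hypotheses of this theorem: Lemma \ref{ergl} only bounds $\|\dot x\|$, so $\|x(t)\|$ may grow linearly and the trajectory need not stay in a bounded set. Second, even the harmless-looking piece $\e(t)\<x(t),x_{\e(t)}-x^*\>$ is only $O(t\e(t))$, so after dividing by $t$ and integrating it contributes $O\big(\int_{t_0}^{t}\e(s)ds\big)$, which is in general \emph{not} $o\big(\int_{t_0}^{t}\frac{\e(s)}{s}ds\big)$ (take $\e(t)=1/\log t$). Moreover the splitting yields $\e(t)\|x(t)-x_{\e(t)}\|^2$ on the left rather than $\e(t)\|x(t)-x^*\|^2$. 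The paper avoids all of this by applying the $\e(t)$-strong-convexity gradient inequality of $g_t(x)=g(x)+\frac{\e(t)}{2}\|x\|^2$ directly between the two points $x(t)$ and $x^*$, which gives $\<\n g_t(x(t)),x(t)-x^*\>\ge g_t(x(t))-g_t(x^*)+\frac{\e(t)}{2}\|x(t)-x^*\|^2$; the curve $x_{\e(t)}$ then enters only through the scalar bound $g_t(x^*)-g_t(x(t))\le g_t(x^*)-g_t(x_{\e(t)})\le\frac{\e(t)}{2}\big(\|x^*\|^2-\|x_{\e(t)}\|^2\big)$, which is precisely the correction your final l'Hospital step already absorbs. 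You should replace your first step by this inequality. A secondary imprecision: after division by $t$ the Hessian term reads $-\frac{\b}{t}\frac{d}{dt}\big(\<\n g(x(t)),x(t)-x^*\>-g(x(t))\big)$, and because of the $\frac{1}{t}$ weight you cannot simply invoke the sign of $\<\n g(x(t)),x(t)-x^*\>$; you must integrate by parts and bound the resulting boundary term and $\frac{1}{s^2}$-weighted integral from above via the convexity estimate $g(x(s))+\<\n g(x(s)),x^*-x(s)\>\le g(x^*)$.
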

\begin{proof}
We introduce the function
$$h_{x^*}:[t_0,+\infty)\To\R,\,h_{x^*}(t)=\frac12\|x(t)-x^*\|^2.$$
For every  $t\ge t_0$ we have
\begin{equation}\label{forh}
\ddot{h}_{x^*}(t)+\frac{\a}{t}\dot{h}_{x^*}(t)=\|\dot{x}(t)\|^2+\left\<\ddot{x}(t)+\frac{\a}{t}\dot{x}(t),x(t)-x^*\right\>.
\end{equation}
Further, for every $t\ge t_0$, the function
$g_t:\mathcal{H}\To\R,\, g_t(x)=g(x)+\frac{\e(t)}{2}\|x\|^2,$ is strongly convex, with modulus $\e(t)$, hence
\begin{equation}\label{forft}
g_t(x^*)-g_t(x(t))\ge \<\n g_t(x(t)), x^*-x(t)\>+\frac{\e(t)}{2}\|x(t)-x^*\|^2.
\end{equation}
But $\n g_t(x(t))=\n g(x(t))+\e(t) x(t)$ and by using \eqref{dysy} we get
$$\n g_t(x(t))=-\ddot{x}(t)-\frac{\a}{t}\dot{x}(t)-\b\n^2 g(x(t))\dot{x}(t) \ \mbox{for every} \ t \geq t_0.$$
Consequently, \eqref{forft} becomes
\begin{equation}\label{forft1}
g_t(x^*)-g_t(x(t))\ge \left\<\ddot{x}(t)+\frac{\a}{t}\dot{x}(t)+\b\n^2 g(x(t))\dot{x}(t), x(t)-x^*\right\>+\frac{\e(t)}{2}\|x(t)-x^*\|^2  \ \mbox{for every} \ t \geq t_0.
\end{equation}
By using \eqref{forh}, the latter relation leads to
\begin{equation}\label{forft2}
g_t(x^*)-g_t(x(t))\ge \ddot{h}_{x^*}(t)+\frac{\a}{t}\dot{h}_{x^*}(t)+\e(t)h_{x^*}(t)+\<\b\n^2 g(x(t))\dot{x}(t), x(t)-x^*\>-\|\dot{x}(t)\|^2
\end{equation}
for every $t \geq t_0$.

For every $t \geq t_0$, let $x_{\e(t)}$ the unique solution of the strongly convex minimization problem
$$\min_{x\in\mathcal{H}}\left(g(x)+\frac{\e(t)}{2}\|x\|^2\right).$$
Then
$$g_t(x^*)-g_t(x(t))\le g_t(x^*)-g_t(x_{\e(t)})=g(x^*)+\frac{\e(t)}{2}\|x^*\|^2-g(x_{\e(t)})-\frac{\e(t)}{2}\|x_{\e(t)}\|^2 \le\frac{\e(t)}{2}(\|x^*\|^2-\|x_{\e(t)}\|^2)$$
for every $t \geq t_0$ and taking into account  \eqref{forft2} we get
\begin{equation}\label{forft4}
\frac{\e(t)}{2}(\|x^*\|^2-\|x_{\e(t)}\|^2)\ge \ddot{h}_{x^*}(t)+\frac{\a}{t}\dot{h}_{x^*}(t)+\e(t)h_{x^*}(t)+\<\b\n^2 g(x(t))\dot{x}(t), x(t)-x^*\>-\|\dot{x}(t)\|^2
\end{equation}
for every $t \geq t_0$. We have
$$\ddot{h}_{x^*}(t)+\frac{\a}{t}\dot{h}_{x^*}(t)=\frac{1}{t^\a}\frac{d}{dt}\left(t^\a\dot{h}_{x^*}(t)\right)$$
and
$$\<\n^2 g(x(t))\dot{x}(t), x(t)-x^*\>=\frac{d}{dt}\big(\<\n g(x(t)), x(t)-x^*\>-g(x(t))\big)$$
hence \eqref{forft4} is equivalent to
\begin{equation}\label{finft}
\frac{\e(t)}{t}\left(h_{x^*}(t)-\frac{1}{2}(\|x^*\|^2-\|x_{\e(t)}\|^2)\right)\le \frac{1}{t}\|\dot{x}(t)\|^2-\frac{1}{t^{\a+1}}\frac{d}{dt}(t^{\a}\dot{h}_{x^*}(t))-\frac{\b}{t}\frac{d}{dt}(\<\n g(x(t)), x(t)-x^*\>-g(x(t))),
\end{equation}
for every $t \geq t_0$.

After integrating \eqref{finft} on $[t_0,t]$, for arbitrary $t > t_0$, it yields
\begin{align}\label{er1}\int_{t_0}^t \frac{\e(s)}{s}\left(h_{x^*}(s)-\frac12(\|x^*\|^2-\|x_{\e(s)}\|^2)\right)ds
\le& \ \int_{t_0}^t\left( \frac{1}{s}\|\dot{x}(s)\|^2-\frac{1}{s^{\a+1}}\frac{d}{ds}\left(s^{\a}\dot{h}_{x^*}(s)\right)\right)ds \nonumber \\
&\ +\int_{t_0}^t \frac{\b}{s}\frac{d}{ds}\left(\<\n g(x(s)), x^*-x(s)\>+g(x(s))\right)ds.
\end{align}
We show that the right-hand side of the above inequality is bounded from above. Indeed, according to Lemma \ref{ergl}, one has $$\frac{1}{t}\|\dot{x}(t)\|^2\in L^1([t_0,+\infty),\R),$$
hence there exists $C_1\ge 0$ such that $\int_{t_0}^t \frac{1}{s}\|\dot{x}(s)\|^2\le C_1$ for every $t\ge t_0$. Further, for every $t \geq t_0$,
\begin{align*}\int_{t_0}^t \frac{1}{s^{\a+1}}\frac{d}{ds}(s^{\a}\dot{h}_{x^*}(s))ds
&=\frac{\dot{h}_{x^*}(t)}{t}-\frac{\dot{h}_{x^*}(t_0)}{t_0}+(\a+1)\int_{t_0}^t\frac{\dot{h}_{x^*}(s)}{s^2}ds\\
&=\frac{\dot{h}_{x^*}(t)}{t}-\frac{\dot{h}_{x^*}(t_0)}{t_0}+(\a+1)\left(\frac{{h}_{x^*}(t)}{t^2}-\frac{{h}_{x^*}(t_0)}{t_0^2}\right)+2(\a+1)\int_{t_0}^t\frac{{h}_{x^*}(s)}{s^3}ds\\
&\ge\frac{\dot{h}_{x^*}(t)}{t}-C_2,
\end{align*}
where $C_2=\frac{\dot{h}_{x^*}(t_0)}{t_0}+(\a+1)\frac{{h}_{x^*}(t_0)}{t_0^2}.$ Consequently, 
\begin{align}\label{er2}\int_{t_0}^t \frac{\e(s)}{s}\left(h_{x^*}(s)-\frac12(\|x^*\|^2-\|x_{\e(s)}\|^2)\right)ds \le& \ C_1+C_2-\frac{\dot{h}_{x^*}(t)}{t}
\nonumber\\
&+\int_{t_0}^t \frac{\b}{s}\frac{d}{ds}(\<\n g(x(s)), x^*-x(s)\>+g(x(s)))ds,
\end{align}
for every $t \geq t_0$. According to Lemma \ref{ergl}, there exists $C_3$ such that
$\frac{1}{t}|\dot{h}_{x^*}(t)|\le C_3\mbox{ for all }t\ge t_0$, which combined with \eqref{er2} guarantees the existence of $C_4\ge 0$ such that
\begin{equation}\label{er3}\int_{t_0}^t \frac{\e(s)}{s}\left(h_{x^*}(s)-\frac12(\|x^*\|^2-\|x_{\e(s)}\|^2)\right)ds\le C_4+\int_{t_0}^t \frac{\b}{s}\frac{d}{ds}\left(\<\n g(x(s)), x^*-x(s)\>+g(x(s))\right)ds
\end{equation}
for every $t\ge t_0$.

On the other hand, for every $t \geq t_0$,
\begin{align*}
\int_{t_0}^t \frac{\b}{s}\frac{d}{ds}(\<\n g(x(s)), x^*-x(s)\>+g(x(s)))ds= & \int_{t_0}^t \frac{\b}{s^2}\big(\<\n g(x(s)), x^*-x(s)\>+g(x(s))\big)ds \\
& + \frac{\b}{t}\big(\<\n g(x(t)), x^*-x(t)\>+g(x(t))\big)\\
& -\frac{\b}{t_0}\big(\<\n g(x(t_0)), x^*-x(t_0)\>+g(x(t_0))\big).
\end{align*}
From the gradient inequality of the convex function $g$ we have
$$\<\n g(x(t)), x^*-x(t)\>+g(x(t))\le g(x^*),$$
hence
\begin{align}\label{forgrad}
\int_{t_0}^t \frac{\b}{s}\frac{d}{ds}(\<\n g(x(s)), x^*-x(s)\>+g(x(s)))ds& \ \le \frac{\b}{t}g(x^*) +\int_{t_0}^t \frac{\b}{s^2}g(x^*)ds \nonumber \\
&   \ -\frac{\b}{t_0}(\<\n g(x(t_0)), x^*-x(t_0)\>+g(x(t_0))),
\end{align}
for all $t \geq t_0$. Obviously the right-hand side of \eqref{forgrad} is bounded from above, hence there exists $C_5>0$ such that
\begin{align}\label{forgrad1}
\int_{t_0}^t \frac{\b}{s}\frac{d}{ds}(\<\n g(x(s)), x^*-x(s)\>+g(x(s)))ds&\le C_5\mbox{ for every }t\ge t_0.
\end{align}
Combining \eqref{er3} and \eqref{forgrad1} we obtain that there exists $C>0$ such that
\begin{align}\label{er4}\int_{t_0}^t \frac{\e(s)}{s}\left(h_{x^*}(s)-\frac12(\|x^*\|^2-\|x_{\e(s)}\|^2)\right)ds&\le  C \mbox{ for every }t\ge t_0.
\end{align}
Since $\lim_{t\To+\infty}\e(t)=0$ we have $\lim_{t\To+\infty}x_{\e(t)}=x^*$, hence
$\lim_{t\To+\infty}(\|x^*\|^2-\|x_{\e(t)}\|^2)=0.$
Consequently, by using the l'Hospital rule and the fact that
$\int_{t_0}^{+\infty} \frac{\e(t)}{t}dt=+\infty$, we get
$$\lim_{t\To+\infty}\!\frac{1}{\int_{t_0}^{t}\frac{\e(s)}{s}ds}\int_{t_0}^{t}\frac{\e(s)}{s}(\|x^*\|^2-\|x_{\e(s)}\|^2)ds=\!\!\lim_{t\To+\infty}\!\frac{\frac{\e(t)}{t}(\|x^*\|^2-\|x_{\e(t)}\|^2)}{\frac{\e(t)}{t}}=\!\!\lim_{t\To+\infty}(\|x^*\|^2-\|x_{\e(t)}\|^2)=0.$$
Dividing \eqref{er4} by $\int_{t_0}^t \frac{\e(s)}{s}ds$ and taking into account that
$\int_{t_0}^{+\infty} \frac{\e(t)}{t}dt=+\infty$,  we obtain that
$$\lim_{t\To+\infty}\frac{1}{\int_{t_0}^{t}\frac{\e(s)}{s}ds}\int_{t_0}^{t}\frac{\e(s)}{s}\|x(s)-x^*\|^2 ds=0.$$
The last equality immediately implies that 
$$\liminf_{t\To+\infty}\|x(t)-x^*\|=0.$$
\end{proof}

\begin{remark}
The strong ergodic convergence obtained in \cite{ACR} for the dynamical system \eqref{dysy-sbc-tikh} is extended to the dynamical system with Hessian driven damping and Tikhonov regularization term \eqref{dysy} under the same hypotheses concerning the Tikhonov parametrization $t \mapsto \e(t)$.  
\end{remark}

\subsection{Strong convergence}\label{sec42}

In order to prove strong convergence for the trajectory generated by the dynamical system \eqref{dysy} to an element of minimum norm of $\argmin g$ we have to strengthen the conditions on the Tikhonov parametrization. This is done in the following result.

\begin{theorem}\label{strongconvergence}
Let be $\a \geq 3$ and $x$ the unique global $C^2$-solution of \eqref{dysy}. Assume that
$$\int_{t_0}^{+\infty} \frac{\e(t)}{t}dt<+\infty \ \mbox{and} \ \lim_{t \To +\infty} \frac{\b}{\e(t)t^{{\frac{\alpha}{3}+1}}}\int_{t_0}^t \e^2(s)s^{\frac{\alpha}{3}+1}ds=0,$$
and that there exist $a>1$ and $t_1\ge t_0$ such that
$$\dot{\e}(t)\le-\frac{a\b}{2}\e^2(t) \ \mbox{ for every }t\ge t_1.$$
In addition, assume that
\begin{itemize}
\item in case $\a=3$: $\lim_{t\To+\infty}t^2\e(t) =+\infty$;
\item in case $\a >3$: there exists $c>0$ such that $t^2\e(t)\ge\frac23 \a\left(\frac13\a-1+\b c^2\right)$ for $t$ large enough.
\end{itemize}
If $x^*= \argmin \{\|x\|: x \in \argmin g\}$ is the element of minimum norm of the nonempty convex closed set $\argmin g$, then
$$\liminf_{t\To+\infty}\|x(t)-x^*\|=0.$$
In addition,
$$\lim_{t\To+\infty}\|x(t)-x^*\|=0,$$
if there exists $T\geq t_0$ such that the trajectory $\{x(t) :t \ge T\}$ stays either in the ball $B(0, \|x^*\|)$, or in its complement.
\end{theorem}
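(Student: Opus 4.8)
The plan is to run the Lyapunov strategy of Theorem~\ref{ergconvergence}, but with $x^*$ now the minimum-norm element of $\argmin g$ and with a heavier weight, dictated by the integrating factor $t^{\a/3+1}$. For $x^*$ as above, set $h_{x^*}(t)=\tfrac12\|x(t)-x^*\|^2$ and, exactly as in \eqref{forft1}--\eqref{forft4} (strong convexity of $g_{\e(t)}=g+\tfrac{\e(t)}{2}\|\cdot\|^2$, whose minimizer $x_{\e(t)}$ satisfies $\|x_{\e(t)}\|\le\|x^*\|$, combined with \eqref{dysy}), derive
\begin{equation*}
\ddot h_{x^*}(t)+\frac{\a}{t}\dot h_{x^*}(t)+\e(t)h_{x^*}(t)\le\|\dot x(t)\|^2-\b\frac{d}{dt}\big(\<\n g(x(t)),x(t)-x^*\>-g(x(t))\big)+\frac{\e(t)}{2}\big(\|x^*\|^2-\|x_{\e(t)}\|^2\big).
\end{equation*}
Two facts are recorded along the way. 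Since here $\int_{t_0}^{+\infty}\tfrac{\e(t)}{t}\,dt<+\infty$, the estimate \eqref{er4} (which is obtained without the divergence hypothesis of Theorem~\ref{ergconvergence}) yields $\int_{t_0}^{+\infty}\tfrac{\e(t)}{t}\|x(t)-x^*\|^2\,dt<+\infty$; moreover, the hypotheses contain condition~(a) of Theorem~\ref{conv} (see Remark~\ref{rema}), so $g(x(t))\To\min g$, whence $\<\n g(x(t)),x(t)-x^*\>\To 0$ and $t\mapsto\<\n g(x(t)),x(t)-x^*\>-g(x(t))$ is bounded. Also, Lemma~\ref{ergl} gives $\sup_{t\ge t_0}\|\dot x(t)\|<+\infty$, $\tfrac1t\|\dot x(t)\|^2\in L^1([t_0,+\infty),\R)$ and $\sup_{t\ge t_0}\tfrac1t|\dot h_{x^*}(t)|<+\infty$.

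The core step is to multiply the differential inequality by $t^{\a/3+1}$ --- equivalently, to rescale time so that the vanishing damping $\tfrac{\a}{t}$ becomes the critical coefficient. Using the identity
\begin{equation*}
t^{\a/3+1}\Big(\ddot h_{x^*}+\frac{\a}{t}\dot h_{x^*}\Big)=\frac{d}{dt}\Big(t^{\a/3+1}\dot h_{x^*}+\Big(\frac{2\a}{3}-1\Big)t^{\a/3}h_{x^*}\Big)-\frac{\a(2\a-3)}{9}\,t^{\a/3-1}h_{x^*},
\end{equation*}
the term $t^{\a/3+1}\e(t)h_{x^*}(t)$ combines with the last term above into $t^{\a/3-1}\big(t^2\e(t)-\tfrac{\a(2\a-3)}{9}\big)h_{x^*}(t)$; after further absorbing the contribution of the Hessian-driven damping --- which is where the summand $\b c^2$ of the threshold originates, the term $-\b\e(t)t^2\<\n g(x(t)),x(t)\>$ being treated as in \eqref{temp5} using $\dot\e\le-\tfrac{a\b}{2}\e^2$, and the resulting $\e^2$-weighted integral being controlled by $\lim_{t\To+\infty}\tfrac{\b}{\e(t)t^{\a/3+1}}\int_{t_0}^t\e^2(s)s^{\a/3+1}\,ds=0$ --- the assumption on $t^2\e(t)$ ($t^2\e(t)\To+\infty$ for $\a=3$, resp. $t^2\e(t)\ge\tfrac23\a(\tfrac13\a-1+\b c^2)$ for $\a>3$) is precisely what makes the coefficient of $h_{x^*}$ bounded below, for $t$ large, by a positive multiple of $t^{\a/3+1}\e(t)$. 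Integrating on $[t_2,t]$ and bounding the right-hand side by the $L^1$-statements of Lemma~\ref{ergl} (for the $\|\dot x\|^2$-term) and by an integration by parts for the $\b$-term (the function $\<\n g(x(\cdot)),x(\cdot)-x^*\>-g(x(\cdot))$ being bounded), one arrives at $\int_{t_0}^{+\infty}s^{\a/3-1}\big(s^2\e(s)-c_0\big)\|x(s)-x^*\|^2\,ds<+\infty$ for a constant $c_0$ with $s^2\e(s)\ge 2c_0$ eventually. Since $\a\ge3$ and $s^2\e(s)$ is bounded below by a positive constant, the weight $s\mapsto s^{\a/3-1}(s^2\e(s)-c_0)$ has non-integrable total mass on $[t_0,+\infty)$, hence $\liminf_{t\To+\infty}\|x(t)-x^*\|=0$.

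For the last assertion, one uses the Tikhonov dichotomy. If $\{x(t):t\ge T\}\subseteq B(0,\|x^*\|)$, then $x$ is bounded, so it has weak sequential cluster points; by the weak lower semicontinuity of $g$ and $g(x(t))\To\min g$, every such cluster point $\bar x$ lies in $\argmin g$, and $\|\bar x\|\le\liminf\|x(t)\|\le\|x^*\|$, so by minimality and uniqueness of the minimum-norm element $\bar x=x^*$; thus $x(t)\rightharpoonup x^*$, and from $\|x^*\|\le\liminf\|x(t)\|\le\limsup\|x(t)\|\le\|x^*\|$ we get $\|x(t)\|\To\|x^*\|$, whence $x(t)\To x^*$ strongly. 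If instead $\{x(t):t\ge T\}\subseteq\mathcal H\setminus B(0,\|x^*\|)$, then $\|x(t)\|\ge\|x^*\|$ forces $g_{\e(t)}(x^*)\le g_{\e(t)}(x(t))$, so the last term in the differential inequality above can be replaced by $0$; a Lyapunov argument of the type used above (now without the Tikhonov remainder) then shows that $t\mapsto\|x(t)-x^*\|$ admits a limit, which, together with $\liminf_{t\To+\infty}\|x(t)-x^*\|=0$, must equal $0$.

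The main obstacle is the core step: calibrating the integrating factor so that, simultaneously, the restoring term $\e(t)h_{x^*}$ dominates the residual $h_{x^*}$-coefficient (which fixes the threshold on $t^2\e(t)$, with $\b c^2$ coming from the Hessian term), the Hessian-driven damping correction is neutralized by the $\e^2$-integral hypothesis, and the residual of $\|\dot x(t)\|^2$ together with the boundary terms is absorbed by the integrability properties in Lemma~\ref{ergl}. Making all of these close with the correct constants is the delicate part; the rest is bookkeeping together with the standard dichotomy argument for the strong limit.
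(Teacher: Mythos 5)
Your overall architecture (a weighted Lyapunov estimate for $h_{x^*}$ to get the $\liminf$, plus the ball/complement dichotomy for the full limit) is plausible, but the core step does not close as written, and the failure is structural rather than cosmetic. Starting from the second-order inequality $\ddot h_{x^*}+\frac{\a}{t}\dot h_{x^*}+\e(t)h_{x^*}\le\|\dot x\|^2-\b\frac{d}{dt}(\cdots)+\frac{\e(t)}{2}(\|x^*\|^2-\|x_{\e(t)}\|^2)$ and multiplying by $t^{\a/3+1}$, the right-hand side produces, after integration, the terms $\int_{t_2}^t s^{\a/3+1}\|\dot x(s)\|^2\,ds$ and the boundary term $t^{\a/3+1}\dot h_{x^*}(t)$ (plus a boundary term $t^{\a/3+1}\bigl(\<\n g(x(t)),x^*-x(t)\>+g(x(t))\bigr)$ from integrating the $\b$-part by parts). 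Lemma \ref{ergl} only gives $\frac1t\|\dot x(t)\|^2\in L^1$ and $\frac1t|\dot h_{x^*}(t)|\le C$; under the hypotheses of this theorem one does not even have $\int t\e(t)\,dt<+\infty$, so Theorem \ref{rates} is unavailable and $t\|\dot x(t)\|^2\in L^1$ cannot be invoked either. Hence $s^{\a/3+1}\|\dot x(s)\|^2$ has no reason to be integrable and $t^{\a/3+1}\dot h_{x^*}(t)$ can grow like $t^{\a/3+2}$; the weight that worked in Theorem \ref{ergconvergence} was $\frac1t$ (i.e.\ $t^\a$ divided by $t^{\a+1}$), and $t^{\a/3+1}$ is far too heavy for a Lyapunov function built only from $h_{x^*}$ and $\dot h_{x^*}$. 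There is also a quantitative mismatch: your identity forces the residual coefficient $\frac{\a(2\a-3)}{9}=\frac{2\a^2-3\a}{9}$ in front of $t^{\a/3-1}h_{x^*}$, whereas the hypothesis only guarantees $t^2\e(t)\ge\frac{2\a^2-6\a}{9}+\frac{2\a\b c^2}{3}$, which is strictly smaller whenever $\b c^2<\frac12$; so the coefficient of $h_{x^*}$ you need to be positive can in fact be negative under the stated assumptions. (A smaller issue: $g(x(t))\To\min g$ does not imply $\<\n g(x(t)),x(t)-x^*\>\To0$; convexity only gives the one-sided bound $\<\n g(x),x^*-x\>+g(x)\le g(x^*)$.)

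The paper's proof avoids exactly these obstructions by working with the second-order energy
$${\cal E}_b^p(t)=t^{p+1}(t+\a-\b-\b p-b-1)(g(x(t))-\min g)+t^{p+2}\tfrac{\e(t)}{2}(\|x(t)\|^2-\|x^*\|^2)+\tfrac{t^p}{2}\|b(x(t)-x^*)+t(\dot x(t)+\b\n g(x(t)))\|^2$$
with the calibrated choice $b=\frac{2\a}{3}$, $p=\frac{\a-3}{3}$: the momentum term makes the coefficient of $t^{p+1}\|\dot x(t)\|^2$ in $\frac{d}{dt}{\cal E}_b^p$ equal to $b+1-\a+\frac p2=-\frac p2\le0$, so the kinetic residual is simply dropped rather than integrated, and the threshold $\frac23\a(\frac13\a-1+\b c^2)$ is precisely what makes the coefficient of $\|x(t)-x^*\|^2$ nonpositive for that functional. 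Moreover, the paper's $\liminf$ statement in the general case is not obtained from a weighted integral at all: it splits into three cases (trajectory eventually outside the ball, where ${\cal E}_b^p$ gives $x(t)\To x_{\e(t)}\To x^*$ strongly; eventually inside the ball, where weak convergence plus norm convergence gives strong convergence; and the crossing case, where a sequence $t_n$ with $\|x(t_n)\|=\|x^*\|$ yields $x(t_n)\To x^*$ and hence the $\liminf$). Your treatment of the "outside the ball" case, which asserts that $\|x(t)-x^*\|$ has a limit from a Lyapunov argument "without the Tikhonov remainder," would run into the same uncontrolled $\|\dot x\|^2$ and boundary terms. To repair the proof you would need to replace your first-order Lyapunov function by one containing the term $\|b(x-x^*)+t(\dot x+\b\n g(x))\|^2$ (or an equivalent device) so that the kinetic residual is absorbed rather than estimated.
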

\begin{proof} 
{\it Case I. Assume that there exists $T \geq t_0$ such that the trajectory $\{x(t) :t \ge T\}$ stays in the complement of the ball $B(0, \|x^*\|)$}

In other words, $\|x(t)\| \geq \|x^*\|$ for every $t \geq T$. For $p\ge 0$, we consider the energy functional 
\begin{align}\label{energynew}
{\cal E}_b^p(t)=& \ t^{p+1}(t+\a-\b-\b p-b-1)(g(x(t))-\min g)+t^{p+2}\frac{\e(t)}{2}(\|x(t)\|^2-\|x^*\|^2) \nonumber\\ 
& \ +\frac{t^p}{2}\|b(x(t)-x^*)+t(\dot{x}(t)+\b\n g(x(t)))\|^2 \mbox{ for every } t\ge t_0.
\end{align}
We define $t_2:=\max\left(t_1, 2(\b+\b p+b+1-\a)\right)$. We have that
\begin{align}\label{forE}
{\cal E}_b^p(t)& \ge t^{p+1}(t+\a-\b-\b p-b-1)(g(x(t))-\min g)+t^{p+2}\frac{\e(t)}{2}(\|x(t)\|^2-\|x^*\|^2)
\nonumber\\ 
& \ge t^{p+2}\frac12(g(x(t))-\min g)+t^{p+2}\frac{\e(t)}{2}(\|x(t)\|^2-\|x^*\|^2)  \mbox{ for every }t\ge t_2.
\end{align}

For every $t\ge t_0$ consider the strongly convex function
$$g_t : \mathcal{H} \To \R, \ g_t(x)=\frac12 g(x)+\frac{\e(t)}{2}\|x\|^2,$$
and denote
$$x_{\e(t)}:=\argmin_{x\in\mathcal{H}}g_t(x).$$
Since $x^*$ is the element of minimum norm in $\argmin \frac12 g = \argmin g$, it holds$\|x_{\e(t)}\| \le\|x^*\|$. Using the gradient inequality we have
$$g_t(x)-g_t(x_{\e(t)})\ge \frac{\e(t)}{2}\|x-x_{\e(t)}\|^2 \ \mbox{for every} \ x \in \mathcal{H}.$$
On the other hand,
$$g_t(x_{\e(t)})-g_t(x^*)=\frac12(g(x_{\e(t)})-\min g)+\frac{\e(t)}{2}(\|x_{\e(t)}\|^2-\|x^*\|^2)\ge \frac{\e(t)}{2}(\|x_{\e(t)}\|^2-\|x^*\|^2).$$
By adding the last two inequalities we obtain
\begin{equation}\label{forf}
g_t(x)-g_t(x^*)\ge \frac{\e(t)}{2}(\|x-x_{\e(t)}\|^2+\|x_{\e(t)}\|^2-\|x^*\|^2) \ \mbox{for every} \ x \in \mathcal{H}.
\end{equation}
From \eqref{forE} and \eqref{forf} we have that for every $t\ge t_2$ it holds
\begin{equation}\label{usefulE}
{\cal E}_b^p(t)\ge t^{p+2}(g_t(x(t))-g_t(x^*))\ge \frac{\e(t)}{2}t^{p+2}\big(\|x(t)-x_{\e(t)}\|^2+\|x_{\e(t)}\|^2-\|x^*\|^2\big).
\end{equation}

The next step is to obtain an upper bound for $t \mapsto E_b^p(t)$, and to this end we will evaluate its time derivative. For every $t\ge t_0$ we have
\begin{align}\label{E1}
\frac{d}{dt}{\cal E}_b^p(t)= & \ t^{p}((p+2)t+(p+1)(\a-\b-\b p-b-1))(g(x(t))-\min g)\nonumber\\
\nonumber &+t^{p+1}(t+\a-\b-\b p-b-1)\<\n g(x(t)),\dot{x}(t)\>\\
\nonumber &+ \left((p+2)t^{p+1}\frac{\e(t)}{2}+t^{p+2}\frac{\dot{\e}(t)}{2}\right)(\|x(t)\|^2-\|x^*\|^2)+t^{p+2}\e(t))\<\dot{x}(t),x(t)\>\\
\nonumber & + \frac{p t^{p-1}}{2}\|b(x(t)-x^*)+t(\dot{x}(t)+\b\n g(x(t)))\|^2\\
& + t^p\<(b+1)\dot{x}(t)+\b\n g(x(t))+t(\ddot{x}(t)+\b\n^2 g(x(t))\dot{x}(t)),b(x(t)-x^*)+t(\dot{x}(t)+\b\n g(x(t)))\>.
\end{align}
By using \eqref{dysy} we have
$$\ddot{x}(t)+\b\n^2 g(x(t))\dot{x}(t)=-\frac{\a}{t}\dot{x}(t)-\n g(x(t))-\e(t)x(t),$$
hence
\begin{align}
\label{useE1} &\<(b+1)\dot{x}(t)+\b\n g(x(t))+t(\ddot{x}(t)+\b\n^2 g(x(t))\dot{x}(t)),b(x(t)-x^*)+t(\dot{x}(t)+\b\n g(x(t)))\> \nonumber \\
\nonumber =& \ \<(b+1-\a)\dot{x}(t)+\b\n g(x(t))-t(\n g(x(t))+\e(t)x(t)),b(x(t)-x^*)+t(\dot{x}(t)+\b\n g(x(t)))\>\\
\nonumber =& \ b(b+1-\a)\<\dot{x}(t),x(t)-x^*\>+(b+1-\a)t(\|\dot{x}(t)\|^2+\<\n g(x(t)),\dot{x}(t)\>)\\
\nonumber & +\b b\<\n g(x(t),x(t)-x^*\>+\b t\<\n g(x(t)),\dot{x}(t)\>+\b^2 t\|\n g(x(t))\|^2\\
&  -bt\<\n g(x(t))+\e(t)x(t),x(t)-x^*\>-t^2\<\n g(x(t))+\e(t)x(t),\dot{x}(t)\>\!-\b t^2\<\n g(x(t))+\e(t)x(t),\n g(x(t))\>
\end{align}
for every $t \geq t_0$. Further, for every $t \geq t_0$,
\begin{align}
\label{useE2} \|b(x(t)-x^*)+t(\dot{x}(t)+\b\n g(x(t)))\|^2= & \ b^2\|x(t)-x^*\|^2+2b t\<\dot{x}(t),x(t)-x^*\>+2b\b t\<\n g(x(t)),x(t)-x^*\>\nonumber \\
& +t^2\|\dot{x}(t)\|^2+2\b t^2\<\n g(x(t)),\dot{x}(t)\>+\b^2 t^2\|\n g(x(t))\|^2,
\end{align}
which means that \eqref{E1} becomes
\begin{align}\label{E2}
\frac{d}{dt}{\cal E}_b^p(t)  = & \ t^{p}((p+2)t+(p+1)(\a-\b-\b p-b-1))(g(x(t))-\min g) \nonumber \\
\nonumber &+ \left((p+2)t^{p+1}\frac{\e(t)}{2}+t^{p+2}\frac{\dot{\e}(t)}{2}\right)(\|x(t)\|^2-\|x^*\|^2)+ \frac{b^2 p t^{p-1}}{2}\|x(t)-x^*\|^2\\
\nonumber & +\frac{(p+2)\b^2 t^{p+1}}{2}\|\n g(x(t))\|^2 +\left(b+1-\a+\frac{p}{2}\right) t^{p+1}\|\dot{x}(t)\|^2\\
\nonumber & +b(b+1-\a+p) t^{p}\<\dot{x}(t),x(t)-x^*\>+b\b( p+1) t^{p}\<\n g(x(t)),x(t)-x^*\>\\
&-b t^{p+1}\<\n g(x(t))+\e(t)x(t),x(t)-x^*\>-\b t^{p+2}\<\n g(x(t))+\e(t)x(t),\n g(x(t))\>.
\end{align}

The gradient inequality for the strongly convex function $x\to g(x)+\frac{\e(t)}{2}\|x\|^2$ gives
$$\<\n g(x(t))+\e(t)x(t),x^*-x(t)\>+\frac{\e(t)}{2}\|x(t)-x^*\|^2\le \left(g(x^*)+\frac{\e(t)}{2}\|x^*\|^2\right)-\left(g(x(t))+\frac{\e(t)}{2}\|x(t)\|^2\right),$$
hence
\begin{align*}
-b t^{p+1}\<\n g(x(t))+\e(t)x(t),x(t)-x^*\>\le & -b t^{p+1}(g(x(t))-g^*)\\
&- b t^{p+1}\frac{\e(t)}{2}(\|x(t)\|^2-\|x^*\|^2)-b t^{p+1}\frac{\e(t)}{2}\|x(t)-x^*\|^2
\end{align*}
for every $t \geq t_0$. Plugging this inequality into \eqref{E2} gives
\begin{align}\label{E3}
\frac{d}{dt}{\cal E}_b^p(t)  \le & \ t^{p}((p+2-b)t+(p+1)(\a-\b-\b p-b-1))(g(x(t))-\min g) \nonumber\\
\nonumber &+ \left((p+2-b)t^{p+1}\frac{\e(t)}{2}+t^{p+2}\frac{\dot{\e}(t)}{2}\right)(\|x(t)\|^2-\|x^*\|^2)+ \left(\frac{b^2 p t^{p-1}}{2}-b t^{p+1}\frac{\e(t)}{2}\right)\|x(t)-x^*\|^2\\
\nonumber & +\left(\frac{(p+2)\b^2 t^{p+1}}{2}-\b t^{p+2}\right)\|\n g(x(t))\|^2 +\left(b+1-\a+\frac{p}{2}\right) t^{p+1}\|\dot{x}(t)\|^2\\
\nonumber & +b(b+1-\a+p) t^{p}\<\dot{x}(t),x(t)-x^*\>+b\b( p+1) t^{p}\<\n g(x(t)),x(t)-x^*\>\\
&-\b t^{p+2}\e(t)\<\n g(x(t)),x(t)\>
\end{align}
for every $t \geq t_0$.
Further we have for every $t \geq t_0$
\begin{align}\label{useE3}
b\b( p+1) t^{p}\<\n g(x(t)),x(t)-x^*\>\leq & \ \frac{b\b(p+1)}{4c^2}t^{p+1}\|\n g(x(t))\|^2+b\b(p+1)c^2t^{p-1}\|x(t)-x^*\|^2
\end{align}
 and
\begin{align}\label{useE4}
-\b t^{p+2}\e(t)\<\n g(x(t)),x(t)\>\leq & \ \frac{\b}{a}t^{p+2}\|\n g(x(t))\|^2+\frac{a\b}{4}\e^2(t)t^{p+2}\|x(t)\|^2,
\end{align}
where $a>1$ and $c>0$ are the constants which are assumed to exist in the hypotheses of the theorem, whereby in case $\alpha=3$ we will take $c=1$.

Combining \eqref{E3}, \eqref{useE3} and \eqref{useE4} and neglecting the nonpositive terms we derive 
\begin{align}\label{E4}
\frac{d}{dt}{\cal E}_b^p(t)  \le & \ t^{p}((p+2-b)t+(p+1)(\a-\b-\b p-b-1))(g(x(t))-\min g) \nonumber\\
\nonumber &+ \left((p+2-b)t^{p+1}\frac{\e(t)}{2}+t^{p+2}\frac{\dot{\e}(t)}{2}+\frac{a\b}{4}\e^2(t)t^{p+2}\right)\|x(t)\|^2\\
\nonumber &-\left((p+2-b)t^{p+1}\frac{\e(t)}{2}+t^{p+2}\frac{\dot{\e}(t)}{2}\right)\|x^*\|^2\\
\nonumber &+ \left(\frac{b^2 p t^{p-1}}{2}+b\b(p+1)c^2t^{p-1}-b t^{p+1}\frac{\e(t)}{2}\right)\|x(t)-x^*\|^2\\
\nonumber & +\left(\frac{(p+2)\b^2 t^{p+1}}{2}+\frac{b\b(p+1)}{4c^2}t^{p+1}-\b\left(1-\frac{1}{a}\right) t^{p+2}\right)\|\n g(x(t))\|^2 \\
& +\left(b+1-\a+\frac{p}{2}\right) t^{p+1}\|\dot{x}(t)\|^2+b(b+1-\a+p) t^{p}\<\dot{x}(t),x(t)-x^*\>
\end{align}
for every $t \geq t_0$.

For the remaining of the proof we choose the parameters appearing in the definition of the energy functional as
$$b:=\frac23\a \ \mbox{and} \ p:=\frac13(\a-3).$$
Since $\a\ge 3$, we have
$$p+2-b=1-\frac{\a}{3}\le0,\,b+1+p-\a=0\mbox{ and }b+1+\frac{p}{2}-\a=-\frac{p}{2}\le 0.$$
Notice that, if $\a=3$, then $(p+2-b)t+(p+1)(\a-\b-\b p-b-1)=-\b\leq0$ and, if $\a>3$, then $p+2-b<0$. This means that there exists $t_3\ge t_2$ such that
$(p+2-b)t+(p+1)(\a-\b-\b p-b-1)<0$ for every $t\ge t_3.$ This implies that the term
$$t^{p}((p+2-b)t+(p+1)(\a-\b-\b p-b-1))(g(x(t))-\min g)$$ in \eqref{E4} is nonpositive for every $t\ge t_2$ and therefore we will omit it. Further, using that  $\lim_{t\To+\infty}t^2\e(t) =+\infty$, if $\a=3$, and that $t^2\e(t)\ge\frac23 \a(\frac13\a-1+\b c^2)$ for $t$ large enough,  if $\a>3,$ we immediately see that there exists $t_4\ge t_3$ such that
$$\frac{b^2 p t^{p-1}}{2}+b\b(p+1)c^2t^{p-1}-b t^{p+1}\frac{\e(t)}{2}\le0 \mbox{ for every }t\ge t_3.$$
Finally, since $a>1$, it is obvious that there exists $t_5\ge t_4$ such that
$$\frac{(p+2)\b^2 t^{p+1}}{2}+\frac{b\b(p+1)}{4c^2}t^{p+1}-\b\left(1-\frac{1}{a}\right) t^{p+2}\le0 \mbox{ for every }t\ge t_5.$$
Thus, \eqref{E4} yields
\begin{align}\label{E5}
\frac{d}{dt}{\cal E}_b^p(t) \le  & \ \left((p+2-b)t^{p+1}\frac{\e(t)}{2}+t^{p+2}\frac{\dot{\e}(t)}{2}+\frac{a\b}{4}\e^2(t)t^{p+2}\right)\|x(t)\|^2 \nonumber \\
\nonumber &-\left((p+2-b)t^{p+1}\frac{\e(t)}{2}+t^{p+2}\frac{\dot{\e}(t)}{2}\right)\|x^*\|^2\\ = & \left((p+2-b)t^{p+1}\frac{\e(t)}{2}+t^{p+2}\frac{\dot{\e}(t)}{2}+\frac{a\b}{4}\e^2(t)t^{p+2}\right)(\|x(t)\|^2-\|x^*\|^2)+
\frac{a\b}{4}\e^2(t)t^{p+2}\|x^*\|^2,
\end{align}
for every $t\ge t_5$. By the hypotheses, we have that
$$(p+2-b)t^{p+1}\frac{\e(t)}{2}+t^{p+2}\frac{\dot{\e}(t)}{2}+\frac{a\b}{4}\e^2(t)t^{p+2}\le 0,$$
for every $t\ge t_5$ and, taking into account the setting considered in this first case, it follows there exists $t_6\ge t_5$ such that
$$\|x(t)\|^2-\|x^*\|^2\ge 0$$
for every $t\ge t_6$. Hence, \eqref{E5} leads to
\begin{align}\label{E51}
\frac{d}{dt}{\cal E}_b^p(t) & \le \frac{a\b}{4}\e^2(t)t^{p+2}\|x^*\|^2 \ \mbox{for every} \ t \geq t_6.
\end{align}
 By integrating \eqref{E51} on the interval $[t_6, t]$, for arbitrary $t \geq t_6$, we get
\begin{align}\label{E6}
{\cal E}_b^p(t) &\le {\cal E}_b^p(t_6)+\frac{a\b}{4}\|x^*\|^2 \int_{t_6}^t \e^2(s)s^{p+2}dt.
\end{align}
Recall that from \eqref{usefulE} we have
$${\cal E}_b^p(t)\ge \frac{\e(t)}{2}t^{p+2}(\|x(t)-x_{\e(t)}\|^2+\|x_{\e(t)}\|^2-\|x^*\|^2),$$
which, combined with \eqref{E6}, gives for every $t\ge t_6$ that
\begin{align}\label{xe1}
\|x(t)-x_{\e(t)}\|^2 & \le \|x^*\|^2-\|x_{\e(t)}\|^2+\frac{2E_b^p(t_6)}{\e(t)t^{\frac13\a+1}} +\frac{a\b}{2\e(t)t^{\frac13\a+1}}\|x^*\|^2 \int_{t_6}^t \e^2(s)s^{\frac13\a+1}dt.
\end{align}

Using that $\lim_{t\To+\infty}\e(t)t^{\frac13\a+1}=+\infty$, $\lim_{t\To+\infty}x_{\e(t)}=x^*$ and taking into account the hypotheses of the theorem, we get that the right-hand side of \eqref{xe1} converges to $0$ as $t \To+\infty$. This yields
$$\lim_{t\To+\infty} x(t)=x^*.$$

{\it Case II. Assume that there exists $T \geq t_0$ such that the trajectory $\{x(t) :t \ge T\}$ stays in the ball $B(0, \|x^*\|)$}

In other words, $\|x(t)\| < \|x^*\|$ for every $t \geq T$. Since 
$$\int_{t_0}^{+\infty}\frac{\e(t)}{t}dt<+\infty,$$
according to Theorem \ref{conv}, we have
$$\lim_{t\To+\infty}g(x(t))=\min g.$$
Consider $\ol x \in \mathcal{H}$ a weak sequential cluster point of the trajectory $x$, which exists since the trajectory is bounded. This means that there exists a sequence 
$(t_n)_{n\in\N} \subseteq [T,+\infty)$ such that $t_n\To+\infty$ and $x(t_n)$ converges weakly to $\ol x$ as $n \To +\infty$.

Since $g$ is weakly lower semicontinuous, it holds
$$g(\ol x)\le\liminf_{n\to+\infty} g(x(t_n))=\min g, \ \mbox{thus} \ \ol x\in\argmin g.$$
Since the norm is weakly lower semicontinuous, it holds
$$\|\ol x\|\le\liminf_{n\to+\infty}\|x(t_n)\|\le\|x^*\|,$$
which, by taking into account that $x^*$ is the unique element of minimum norm in $\argmin g$, implies $\ol x=x^*$. This shows that the whole trajectory $x$ converges weakly to $x^*$.

Thus,
$$\|x^*\|\leq \liminf_{t\to+\infty}\|x(t)\|\le\limsup_{t\to+\infty}\|x(t)\|\le\|x^*\|, \
\mbox{hence} \ \lim_{t\To+\infty}\|x(t)\|=\|x^*\|.$$
But by taking into account that $x(t)\rightharpoonup x^*$ as $t\To+\infty$, we obtain that  the convergence is strong, that is
$$\lim_{t\To+\infty}x(t)=x^*.$$

{\it Case III. Assume that for every $T \geq t_0$ there exists $t\ge T$ such that $\|x^*\| > \|x(t)\|$ and there exists $s\ge T$ such that $\|x^*\| \le \|x(s)\|$}

By the continuity of $x$ it follows that there exists a sequence 
$(t_n)_{n\in\N} \subseteq [t_0,+\infty)$ such that $t_n\To+\infty$ as $n \To +\infty$ and
$$\|x(t_n)\|=\|x^*\| \mbox{ for every }n\in\N.$$

We  will show that $x(t_n) \To x^*$ as $n\To+\infty.$ To this end we consider $\ol x \in \mathcal{H}$ a weak sequential cluster point of the sequence $(x(t_n))_{n\in\N}.$ By repeating the arguments used in the previous case (notice that the sequence is bounded) it follows that $(x(t_n))_{n\in\N}$ converges weakly to $x^*$ as $n \To +\infty$. Since
$\|x(t_n)\|\To\|x^*\|$ as $n\To+\infty$, it yields $\|x(t_n)- x^*\|\To 0$ as $n\To+\infty$. This shows that
$$\liminf_{t\To+\infty}\|x(t)-x^*\|=0.$$
\end{proof} 

\begin{remark}\label{rem45}
Theorem \ref{strongconvergence} can be seen as an extension of a result given in \cite{ACR} for the dynamical system \eqref{dysy-sbc-tikh} to the dynamical system with Hessian driven damping and Tikhonov regularization term \eqref{dysy}. One can notice that for the choice $\beta =0$, which means that the Hessian driven damping is removed, the lower bound we impose for $t \mapsto t^2\e(t)$ in case $\alpha >3$ is less tight than the one considered in \cite[Theorem 4.1]{ACR} for the system \eqref{dysy-sbc-tikh}. As we will see later, this lower bound influences the asymptotic behaviour of the trajectory.

In case $\beta >0$, in order to guarantee that  
$$\lim_{t \To +\infty} \frac{\b}{\e(t)t^{{\frac{\alpha}{3}+1}}}\int_{t_0}^t \e^2(s)s^{\frac{\alpha}{3}+1}ds=0,$$
one just have to additionally assume that
$$\int_{t_0}^{+\infty} \e(t)dt<+\infty$$
and that the function 
$$t\To t^{\frac{1}{3}\a+1}\e(t) \ \mbox{is nondecreasing for} \ t \ \mbox{large enough}.$$
This follows from Lemma \ref{nullimit}, by also taking into account that $\lim_{t\To+\infty}\e(t)t^{\frac{\alpha}{3}+1}=+\infty$.

Combining the main results in the last two sections, one can see that if
$$\int_{t_0}^{+\infty} t \e(t)dt<+\infty,$$
the function 
$$t\To t^{\frac{1}{3}\a+1}\e(t) \ \mbox{is nondecreasing for} \ t \ \mbox{large enough},$$
there exist $a>1$ and $t_1\ge t_0$ such that
$$\dot{\e}(t)\le-\frac{a\b}{2}\e^2(t) \ \mbox{ for every }t\ge t_1,$$
and
\begin{itemize}
\item in case $\a=3$: $\lim_{t\To+\infty}t^2\e(t) =+\infty$;
\item in case $\a >3$: there exists $c>0$ such that $t^2\e(t)\ge\frac23 \a\left(\frac13\a-1+\b c^2\right)$ for $t$ large enough,
\end{itemize}
then one obtains both fast convergence of the function values and strong convergence of the trajectory to the minimal norm solution. This is for instance the case when $\e(t) = t^{-\gamma}$ for all $\gamma \in (1,2)$.
\end{remark}
In the following, we would like to comment on the role on the condition in Theorem \ref{strongconvergence} which asks, in case $\alpha >3$, for the existence of a positive constant $c$ such that $t^2\e(t)\ge\frac23 \a(\frac13\a-1+\b c^2)$ for $t$ large enough. 
To this end it is very helpful to visualize the trajectories generated by the dynamical system \eqref{dysy} in relation with the minimization of the function given in \eqref{functiong} for a fixed large value of $\alpha$ and Tikhonov  parametrizations of the form  $t \mapsto \e(t)=t^{-\gamma}$, 
for different values of $\gamma \in (1,2)$. The trajectories in the plot in Figure \ref{fig:ex2} have been generated for $\alpha=200$ and $\beta =1$ and are all approaching the minimum norm solution $x^*=0$.  The norm of the difference between the trajectory and the minimum norm solution is guaranteed to be bounded from above by a function which converges to zero, 
after the time point $t$ is reached at which the inequality $t^2\e(t)\ge\frac23 \a(\frac13\a-1+\b c^2)$ ``starts'' being fulfilled. For large $\alpha$ and the Tikhonov parametrizations considered in our experiment, the closer $\gamma$ is to $1$ is, the faster is this inequality fulfilled. This is reflected by the behaviour of the trajectories plotted in  Figure \ref{fig:ex2}.

\begin{figure}[h]
	\centering
	\captionsetup[subfigure]{position=top}
	{\includegraphics*[viewport=130 250 485 574,width=0.7\textwidth]{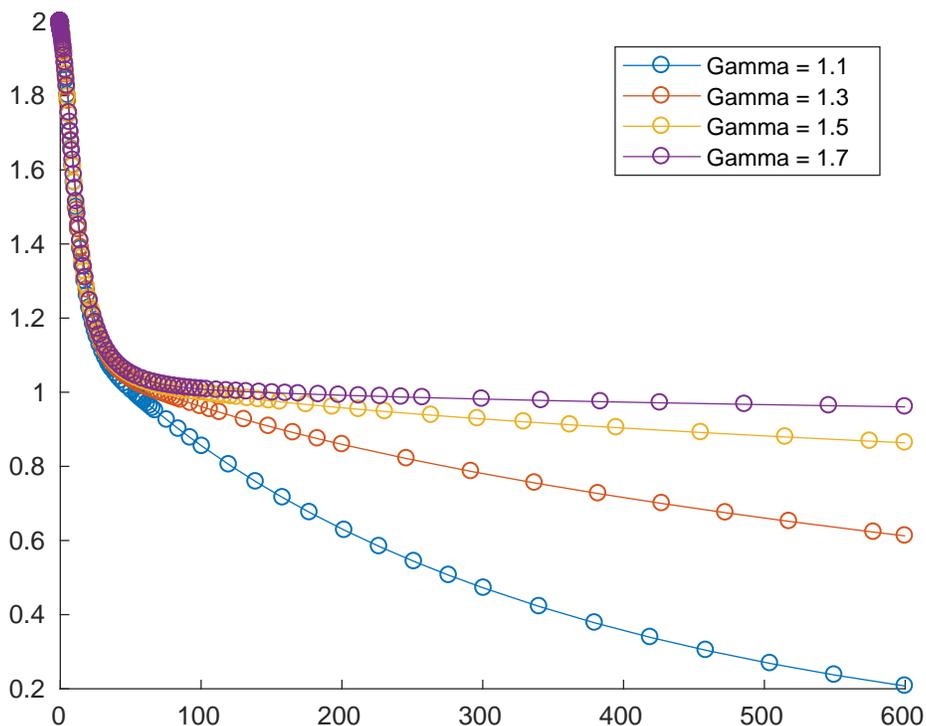}} 
	\caption{\small The behaviour of the trajectories generated by the dynamical system \eqref{dysy} in relation with the minimization of the function given in \eqref{functiong} for $\a=200$, $\beta=1$, $\e(t)=t^{-\gamma}$ and different values for $\gamma \in (1,2)$.}
\label{fig:ex2}	
\end{figure}

Finally, we would like to formulate some possible questions of future research related to the dynamical sytem \eqref{dysy}:
\begin{itemize}
\item[$\bullet$] in \cite[Theorem 3.4]{att-c-p-r-math-pr2018} it has been proved for the dynamical system \eqref{dysy-sbc} that, when $g$ is strongly convex, the rates of convergence of the function values and the tracjectory are both of $O(t^{-\frac{2}{3}\alpha})$, thus they can be made arbitrarily fast by taking $\alpha$ large. It is natural to ask if similar rates of convergence can be obtained in a similar setting for the dynamical system \eqref{dysy} (see, also, \cite[Section 5.4]{ACR}).

\item[$\bullet$] in the literature, in the context of dynamical systems, regularization terms have been considered not only in open-loop, but also in closed-loop form (see, for instance, \cite{ARS}). It is an interesting question if one can obtain for the dynamical system \eqref{dysy} similar results if the Tikhonov regularization term is taken in closed-loop form.

\item[$\bullet$] a natural question is to formulate proper numerical algorithms via time discretization of \eqref{dysy}, to investigate their theoretical convergence properties, and to validate them with numerical experiments.
\end{itemize}

{\bf Acknowledgements.} The authors are thankful to an anonymous reviewer for comments and remarks which improved the quality of the paper.

\appendix
\section{Appendix}\label{app}

In this appendix, we collect some lemmas and technical results which we will use in the analysis of the dynamical system \eqref{dysy}. The following lemma was stated for instance  in \cite[Lemma A.3]{ACR} and is used to prove the convergence of the objective function along the trajectory to its minimal value.

\begin{lemma}\label{nullimit} Let $\d>0$ and $f\in L^1((\d,+\infty),\R)$ be a nonnegative and continuous function.  Let $\varphi:[\d,+\infty)\To[0,+\infty)$ be a nondecreasing function such that $\lim_{t\To+\infty}\varphi(t)=+\infty.$ Then it holds
$$\lim_{t\To+\infty}\frac{1}{\varphi(t)}\int_\d^t\varphi(s)f(s)ds=0.$$
\end{lemma}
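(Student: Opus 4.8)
This is the standard fact that a nondecreasing weight $\varphi$ tending to $+\infty$ produces a vanishing weighted average of an $L^1$ function, and the plan is a direct $\varepsilon$-splitting argument. First I would fix $\varepsilon>0$ and use $f\in L^1((\d,+\infty),\R)$ to choose $t_\varepsilon\ge\d$ with $\int_{t_\varepsilon}^{+\infty}f(s)\,ds\le\varepsilon$. Since $\varphi$ is nondecreasing it is bounded on the compact interval $[\d,t_\varepsilon]$ by $\varphi(t_\varepsilon)$, so the integrand $\varphi f$ is locally integrable and the quantity $M_\varepsilon:=\int_\d^{t_\varepsilon}\varphi(s)f(s)\,ds$ is finite; moreover $\varphi(s)f(s)\ge 0$ everywhere, so every integral in sight is nonnegative and it suffices to produce an upper bound.

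Next, for $t\ge t_\varepsilon$ large enough that $\varphi(t)>0$ (possible since $\varphi(t)\To+\infty$), I would split
\[
\frac{1}{\varphi(t)}\int_\d^t\varphi(s)f(s)\,ds=\frac{M_\varepsilon}{\varphi(t)}+\frac{1}{\varphi(t)}\int_{t_\varepsilon}^t\varphi(s)f(s)\,ds.
\]
For the second term, the monotonicity of $\varphi$ gives $\varphi(s)\le\varphi(t)$ for all $s\in[t_\varepsilon,t]$, hence
\[
\frac{1}{\varphi(t)}\int_{t_\varepsilon}^t\varphi(s)f(s)\,ds\le\int_{t_\varepsilon}^t f(s)\,ds\le\int_{t_\varepsilon}^{+\infty}f(s)\,ds\le\varepsilon.
\]
For the first term, $M_\varepsilon$ is a fixed constant and $\varphi(t)\To+\infty$, so there is $T_\varepsilon\ge t_\varepsilon$ with $M_\varepsilon/\varphi(t)\le\varepsilon$ for all $t\ge T_\varepsilon$.

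Combining the two estimates yields $0\le\frac{1}{\varphi(t)}\int_\d^t\varphi(s)f(s)\,ds\le 2\varepsilon$ for every $t\ge T_\varepsilon$, and since $\varepsilon>0$ was arbitrary this proves the claimed limit. There is no genuine obstacle here; the only points worth a word of care are that a nondecreasing $\varphi$ is automatically locally bounded (so $M_\varepsilon<+\infty$ and $\varphi f$ is locally integrable), and that $\varphi(t)$ is eventually strictly positive so that dividing by $\varphi(t)$ is legitimate.
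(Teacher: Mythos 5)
Your proof is correct. The paper itself gives no proof of this lemma --- it is quoted from \cite[Lemma A.3]{ACR} --- and your $\varepsilon$-splitting argument (cut the integral at a point $t_\varepsilon$ beyond which the tail of $f$ is small, bound the tail term by $\int_{t_\varepsilon}^{t}f$ using $\varphi(s)\le\varphi(t)$, and kill the fixed head term by $\varphi(t)\To+\infty$) is the standard and complete way to establish it; the remarks on local boundedness of the nondecreasing $\varphi$ and its eventual strict positivity correctly dispose of the only technical points.
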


The following statement is the continuous counterpart of a convergence result
of quasi-Fej\'er monotone sequences. For its proofs we refer to \cite[Lemma 5.1]{abbas-att-sv}.

\begin{lemma}\label{fejer-cont1} Suppose that $F:[t_0,+\infty)\rightarrow\R$ is locally absolutely continuous and bounded from below and that
there exists $G\in L^1([t_0,+\infty), \R)$ such that
$$\frac{d}{dt}F(t)\leq G(t)$$
for almost every $t \in [t_0,+\infty)$. Then there exists $\lim_{t\To +\infty} F(t)\in\R$.
\end{lemma}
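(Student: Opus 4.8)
The plan is to "integrate out" the dominating function $G$ so that what remains is monotone. Specifically, I would define
$$H(t) = F(t) - \int_{t_0}^t G(s)\,ds \quad \text{for } t \geq t_0.$$
Since $F$ is locally absolutely continuous and $s \mapsto \int_{t_0}^s G(u)\,du$ is absolutely continuous on every bounded subinterval (as $G$ is locally integrable), $H$ is locally absolutely continuous, and its almost-everywhere derivative is $\dot H(t) = \dot F(t) - G(t) \leq 0$. Hence $H$ is nonincreasing on $[t_0,+\infty)$.

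The next step is to exploit the two consequences of $G\in L^1([t_0,+\infty),\R)$. First, the limit $I := \int_{t_0}^{+\infty} G(s)\,ds$ exists and is finite; second, the partial integrals $t \mapsto \int_{t_0}^t G(s)\,ds$ are uniformly bounded, say $\left|\int_{t_0}^t G(s)\,ds\right| \leq M$ for all $t$ and some $M \geq 0$. Combining the latter with the hypothesis that $F$ is bounded from below, say $F(t) \geq m$ for all $t \geq t_0$, yields $H(t) \geq m - M$ for every $t \geq t_0$; thus $H$ is bounded from below.

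A nonincreasing function that is bounded from below converges to a finite limit, so $L := \lim_{t \To +\infty} H(t) \in \R$ exists. Finally, writing $F(t) = H(t) + \int_{t_0}^t G(s)\,ds$ and letting $t \To +\infty$ gives $\lim_{t \To +\infty} F(t) = L + I \in \R$, which is exactly the claim.

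There is no genuine obstacle here; the only points requiring care are justifying that the almost-everywhere inequality $\dot H \leq 0$ truly forces monotonicity of $H$ (this is precisely where local absolute continuity enters, via $H(t) - H(t') = \int_{t'}^t \dot H(s)\,ds \leq 0$ whenever $t \geq t' \geq t_0$), and verifying that the lower bound on $H$ genuinely uses both hypotheses, namely the boundedness from below of $F$ and the $L^1$ property of $G$ through the uniform bound $M$ on its partial integrals.
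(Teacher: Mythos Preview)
Your argument is correct and is precisely the standard route to this lemma: subtract the primitive of $G$ to obtain a nonincreasing function, use the lower bound on $F$ together with the $L^1$ bound on $G$ to see that this auxiliary function is bounded from below, and then recover the limit of $F$ by adding back the convergent integral $\int_{t_0}^{+\infty} G(s)\,ds$. The care you take in invoking local absolute continuity to pass from $\dot H\le 0$ a.e.\ to genuine monotonicity is exactly the right point to flag.

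There is nothing to compare against in the paper itself: the authors do not supply a proof but simply refer to \cite[Lemma 5.1]{abbas-att-sv}. Your self-contained argument is the expected one and would serve perfectly well in place of that citation.
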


The following technical result is \cite[Lemma 2]{att-p-r-jde2016}.
\begin{lemma}\label{lemmader} Let $u : [t_0,+\infty)\To\mathcal{H}$ be a continuously differentiable function satisfying $u(t) +\frac{t}{\a}\dot{u}(t)\To u \in\mathcal{H}$ as $t\To+\infty,$ where $\a>0.$
 Then $u(t)\To u$ as $t \To+\infty.$
 \end{lemma}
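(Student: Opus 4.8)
The plan is to rewrite the hypothesis as a first-order linear identity for $u$, solve it by an integrating factor, and then recognise the resulting formula for $u(t)$ as a weighted average of the quantity that is assumed to converge; the conclusion then follows from a Ces\`aro-type argument. To avoid a notational clash between the function and its limit, I denote by $u_\infty \in \mathcal{H}$ the limit element, so that the hypothesis reads $w(t) := u(t) + \frac{t}{\a}\dot{u}(t) \To u_\infty$ as $t \To +\infty$, and the goal is $u(t) \To u_\infty$.

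First I would observe that, since $u$ is continuously differentiable, $\frac{d}{dt}\big(t^\a u(t)\big) = \a t^{\a-1} u(t) + t^\a \dot{u}(t) = \a t^{\a-1}\big(u(t) + \frac{t}{\a}\dot{u}(t)\big) = \a t^{\a - 1} w(t)$. Integrating on $[t_0, t]$ and dividing by $t^\a$ then yields the representation
$$u(t) = \frac{t_0^\a}{t^\a}\, u(t_0) + \frac{\a}{t^\a}\int_{t_0}^t s^{\a-1}\, w(s)\, ds \quad \text{for every } t \ge t_0.$$
Since $\a > 0$, the first term tends to $0$. Moreover $\frac{\a}{t^\a}\int_{t_0}^t s^{\a-1}\, ds = 1 - (t_0/t)^\a \To 1$, so the second term is, up to a vanishing correction, a genuine weighted average of the values $w(s)$ with weights concentrating on large $s$; it should therefore inherit the limit $u_\infty$ of $w$.

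To make this rigorous I would subtract the constant $u_\infty$ and estimate the norm of $\frac{\a}{t^\a}\int_{t_0}^t s^{\a-1}\big(w(s) - u_\infty\big)\, ds$. Fixing $\e > 0$, I would choose $T \ge t_0$ with $\|w(s) - u_\infty\| < \e$ for all $s \ge T$, and split the integral at $T$. On $[t_0, T]$ the integrand is continuous, hence bounded, so that contribution is $O(t^{-\a}) \To 0$; on $[T, t]$ the bound $\|w(s) - u_\infty\| < \e$ gives a contribution at most $\e \cdot \frac{\a}{t^\a}\int_T^t s^{\a-1}\, ds \le \e$. Letting $t \To +\infty$ and then $\e \To 0$ forces $\frac{\a}{t^\a}\int_{t_0}^t s^{\a-1}\big(w(s) - u_\infty\big)\, ds \To 0$, and combining this with the decay of the boundary term gives $u(t) \To u_\infty$, which is the claim.

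There is no genuine obstacle here; the only point requiring care is the standard $\e$-splitting of the averaging integral, where one must check that the fixed tail over $[t_0, T]$ is killed by the $t^{-\a}$ prefactor (this is exactly where $\a > 0$ enters) while the weights on $[T, t]$ remain controlled by $\frac{\a}{t^\a}\int_T^t s^{\a-1}\, ds \le 1$. The continuous differentiability of $u$ is used only to legitimise the fundamental-theorem-of-calculus step applied to $t \mapsto t^\a u(t)$.
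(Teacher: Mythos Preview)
Your argument is correct. The integrating-factor identity $\frac{d}{dt}\big(t^\a u(t)\big)=\a t^{\a-1}w(t)$ together with the $\e$-splitting of the resulting weighted average is precisely the standard route to this lemma; every step checks, and you have correctly isolated where the assumption $\a>0$ is used.

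As for comparison: the paper does not give its own proof of this lemma. It is stated in the Appendix with a reference to \cite[Lemma~2]{att-p-r-jde2016}, so there is nothing in the present paper to compare against. Your proof is the natural one and is essentially what one finds in that source.
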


The continuous version of  the Opial Lemma (see \cite{att-c-p-r-math-pr2018}) is the main tool for proving weak convergence for the generated trajectory.

\begin{lemma}\label{Opial}
Let $S \subseteq \mathcal{H}$ be  a nonempty set and $x : [t_0, +\infty) \to H$ a given map such that:
\begin{align*}
& (i) \quad \mbox{for every }z \in S \ \mbox{the limit} \ \lim\limits_{t \To +\infty} \| x(t) - z \|  \ \mbox{exists};\\
& (ii) \quad \text{every weak sequential limit point of } x(t) \text{ belongs to the set }S.
\end{align*}
Then the trajectory $x(t)$ converges weakly to an element in $S$ as $t \to + \infty$.
\end{lemma}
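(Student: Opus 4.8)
The plan is to deduce weak convergence from the two hypotheses through the classical two-step scheme: first establish that the set of weak sequential limit points of $x(t)$ is nonempty, and then show that this set is a singleton.

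First I would observe that hypothesis $(i)$, applied to any fixed $z \in S$ (such a $z$ exists since $S \neq \emptyset$), guarantees that $t \mapsto \|x(t) - z\|$ has a finite limit, and is therefore bounded on $[t_0, +\infty)$. Consequently the trajectory $x$ is bounded in $\mathcal{H}$, and since bounded sets in a Hilbert space are weakly sequentially relatively compact, $x(t)$ admits at least one weak sequential limit point. By hypothesis $(ii)$, every such limit point lies in $S$.

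The crux of the argument is to prove the uniqueness of the weak sequential limit point. Suppose $\ol x_1$ and $\ol x_2$ are two weak sequential limit points; by $(ii)$ both belong to $S$. I would exploit the elementary identity
$$\|x(t) - \ol x_1\|^2 - \|x(t) - \ol x_2\|^2 = -2\<x(t), \ol x_1 - \ol x_2\> + \|\ol x_1\|^2 - \|\ol x_2\|^2,$$
valid for every $t \geq t_0$. By hypothesis $(i)$, applied with $z = \ol x_1$ and $z = \ol x_2$, the two squared-norm terms on the left-hand side converge as $t \To +\infty$; hence $\<x(t), \ol x_1 - \ol x_2\>$ converges to some $\ell \in \R$. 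Evaluating this limit along a sequence $(t_n)_{n\in\N}$ with $x(t_n) \rightharpoonup \ol x_1$ gives $\ell = \<\ol x_1, \ol x_1 - \ol x_2\>$, while evaluating it along a sequence $(s_m)_{m\in\N}$ with $x(s_m) \rightharpoonup \ol x_2$ gives $\ell = \<\ol x_2, \ol x_1 - \ol x_2\>$. Subtracting the two expressions yields $\|\ol x_1 - \ol x_2\|^2 = 0$, whence $\ol x_1 = \ol x_2 =: \ol x \in S$.

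Finally I would upgrade the uniqueness of the weak limit point to genuine weak convergence of the whole continuous-time trajectory by a standard contradiction argument: if $x(t)$ did not converge weakly to $\ol x$, there would exist $\xi \in \mathcal{H}$, $\delta > 0$ and a sequence $t_n \To +\infty$ with $|\<x(t_n) - \ol x, \xi\>| \geq \delta$ for all $n$; but $(x(t_n))_{n\in\N}$ is bounded, hence possesses a weakly convergent subsequence, whose limit must coincide with $\ol x$ by the uniqueness just established, contradicting the separation. The only delicate point in the whole argument is precisely this passage from sequential to full, continuous-parameter convergence, which is why the boundedness of $x$ and the single-limit-point property must both be invoked; by comparison, the identity manipulation establishing uniqueness is entirely routine.
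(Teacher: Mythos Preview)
Your proof is correct and follows the classical route for establishing the continuous Opial lemma: boundedness from (i), existence of weak cluster points by reflexivity, uniqueness via the polarization-type identity, and the final subsequence contradiction to pass from sequential cluster uniqueness to full weak convergence. Note, however, that the paper does not supply its own proof of this lemma at all --- it is recorded in the appendix as a known tool and attributed to \cite{att-c-p-r-math-pr2018} --- so there is no in-paper argument to compare against; your write-up simply fills in the standard proof that the authors chose to cite rather than reproduce.
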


\end{document}